\newcommand{\EH}{{\check{E}}}
\theoremstyle{definition}
\newtheorem{thmA}{Theorem}
\renewcommand{\theenumi}{\arabic{enumi}}
\renewcommand{\p@enumii}{\theenumi.}
\subjclass[2010]{Primary: 55P43; % Spectra with additional structure $A_\infty$, $E_\infty$ etc. 
 Secondary: 
55N22,  %Bordism and cobordism theories, formal group laws
55T15} % Adams spectral sequences 
\begin{document}
	\title{On a nilpotence conjecture of J.P. May}
	\author{Akhil Mathew}
	\address{University of California, Berkeley, CA 94720-3840}
	\email{amathew@math.berkeley.edu}
	\urladdr{http://math.berkeley.edu/~amathew/}

	\author{Niko Naumann }
	\address{University of Regensburg\\
	NWF I - Mathematik; Regensburg, Germany}
	\email{Niko.Naumann@mathematik.uni-regensburg.de}
	\urladdr{http://homepages.uni-regensburg.de/~nan25776/}

	\author{Justin Noel}
	\address{University of Regensburg\\
	NWF I - Mathematik; Regensburg, Germany}
	\email{justin.noel@mathematik.uni-regensburg.de}
	\urladdr{http://nullplug.org}
	\thanks{The first author was partially supported by the NSF Graduate Research Fellowship under grant DGE-110640. The latter two authors were supported by the SFB 1085 - Higher Invariants, Regensburg.}

\date{\today}

\begin{abstract}
	We prove a conjecture of J.P.~May concerning the nilpotence of elements in
	ring spectra with power operations, i.e., $H_\infty$-ring spectra. Using an
	explicit nilpotence bound on the torsion elements in $K(n)$-local
	$H_\infty$-algebras over $E_n$,  we reduce the conjecture to the
	nilpotence theorem of Devinatz, Hopkins, and Smith. As
	corollaries we obtain nilpotence results in various bordism
	rings including $M\mathit{Spin}_*$ and $M\mathit{String}_*$, results about the behavior of the Adams spectral sequence for $E_\infty$-ring spectra, and the non-existence of $E_\infty$-ring structures on certain complex oriented ring spectra.
\end{abstract}

\maketitle

\section{Introduction} 
\label{sec:introduction}
	Understanding the stable homotopy of the sphere has been a driving motivation of algebraic topology from its very beginning. Early landmark results include Serre's theorem that every element in the positive stems is of finite order and Nishida's theorem that every element in the positive stems is (smash-)nilpotent. This was vastly generalized by the nilpotence theorem of Devinatz, Hopkins and Smith, which states that complex bordism is sufficiently fine a homology theory to detect nilpotence in general ring spectra. On the other hand, already Nishida's proof used basic geometric constructions, namely extended powers, to transform the additive information of Serre's theorem into the multiplicative statement of nilpotence. This was made much more systematic and general in the work of May and his collaborators on $H_\infty$-ring spectra, which in particular led to a specific nilpotence conjecture for this restricted class of ring spectra. In this note, we will establish his conjecture, as follows:

\begin{thmA}\label{thm:may-original}
	Suppose that $R$ is an $H_\infty$-ring spectrum and $x\in \pi_* R$ is in the kernel of the Hurewicz homomorphism $\pi_*R \rightarrow H_*(R;\bZ)$. Then $x$ is nilpotent. 
\end{thmA}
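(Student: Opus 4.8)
The plan is to deduce the statement from the Morava $K$-theoretic form of the nilpotence theorem of Devinatz, Hopkins, and Smith: an element in the homotopy of a ring spectrum is nilpotent provided its Hurewicz image in $K(n)_* R$ is nilpotent for every prime $p$ and every $0 \le n \le \infty$, with the conventions $K(0) = H\mathbb{Q}$ and $K(\infty) = H\mathbb{F}_p$. Since $R$ is in particular a ring spectrum, it therefore suffices to show, for each such $p$ and $n$, that $x$ maps to a nilpotent element of $K(n)_* R$. The cases $n = 0$ and $n = \infty$ are immediate: the Hurewicz maps $\pi_* R \to H_*(R;\mathbb{Q})$ and $\pi_* R \to H_*(R;\mathbb{F}_p)$ both factor through the integral Hurewicz map $\pi_* R \to H_*(R;\mathbb{Z})$, which annihilates $x$ by hypothesis, so there the image of $x$ is even zero. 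All of the content is in the range $0 < n < \infty$, and it is only here that the $H_\infty$-structure of $R$ enters.

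Fix such an $n$ and a prime $p$, and let $E_n$ be the associated Lubin--Tate spectrum, an $E_\infty$-ring whose homotopy is a torsion-free, $p$-complete, local ring with maximal ideal $\mathfrak m$. First I would form $A := L_{K(n)}(E_n \wedge R)$. Since $E_n$ is $E_\infty$ and $R$ is $H_\infty$, the smash product $E_n \wedge R$ inherits an $H_\infty$-structure, and $K(n)$-localization preserves it, so $A$ is a $K(n)$-local $H_\infty$-algebra over $E_n$ --- exactly the setting of the explicit nilpotence bound established earlier in the paper. The crux of the reduction is that the image $\bar x$ of $x$ in $\pi_* A$ is \emph{torsion}. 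To see this I would rationalize: the rationalization $E_n[1/p]$ is a rational spectrum, hence an $H\mathbb{Q}$-algebra, so its unit factors through $S \to H\mathbb{Q}$ and the rationalized $E_n$-Hurewicz map factors as
\[
  \pi_* R \longrightarrow H_*(R;\mathbb{Q}) \longrightarrow \pi_*(E_n[1/p]) \otimes_{\mathbb{Q}} H_*(R;\mathbb{Q}) \cong (E_n)_* R \otimes \mathbb{Q},
\]
the final isomorphism being K\"unneth over the field $\mathbb{Q}$. As $x$ already dies in $H_*(R;\mathbb{Q})$, its image in $(E_n)_* R \otimes \mathbb{Q}$ vanishes; since $\pi_* E_n$ is $p$-local, this means exactly that the image of $x$ in $(E_n)_* R$, and hence $\bar x \in \pi_* A$, is $p$-power torsion.

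The explicit nilpotence bound now applies to the torsion class $\bar x$ in the $K(n)$-local $H_\infty$-$E_n$-algebra $A$ and shows $\bar x$ is nilpotent. To transport this to $K(n)_* R$, I would use the reduction $E_n \to E_n/\mathfrak m$, whose target is a wedge of suspensions of $K(n)$: smashing with $R$ and localizing yields a ring homomorphism $\pi_* A \to (E_n/\mathfrak m)_* R$ carrying the nilpotent element $\bar x$ to the Hurewicz image of $x$, which is therefore nilpotent as well. Since $(E_n/\mathfrak m)_* R$ is a faithfully flat --- in particular injective --- extension of $K(n)_* R$, nilpotence there descends to nilpotence of the image of $x$ in $K(n)_* R$, settling the intermediate case and, with the first paragraph, completing the proof. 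The hard part is the single ingredient invoked at the start of this paragraph, namely the explicit nilpotence bound for torsion classes in $K(n)$-local $H_\infty$-algebras over $E_n$; this is the genuinely new input and the only place where the power operations of an $H_\infty$-ring are used, while everything else is a formal rearrangement that converts the vanishing of the rational and mod-$p$ Hurewicz images into the torsion hypothesis that the bound requires.
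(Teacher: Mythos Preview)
Your proposal is correct and follows essentially the same route as the paper: reduce to the nilpotence theorem, dispose of $K(0)=H\mathbb{Q}$ and $K(\infty)=H\mathbb{F}_p$ by factoring through the integral Hurewicz map, and for $0<n<\infty$ pass to $T=L_{K(n)}(E_n\wedge R)$, observe that the image of $x$ there is $p$-power torsion, apply the explicit nilpotence bound (\Cref{thm:nilpotence}), and then map down to $K(n)_*R$. The only cosmetic difference is that the paper notes more directly that $x$ itself is torsion in $\pi_*R$ (since $H_*(R;\mathbb{Q})\cong\pi_*R\otimes\mathbb{Q}$ via the Hurewicz map), whereas you establish torsion of the image in $(E_n)_*R$ via a factorization through $H_*(R;\mathbb{Q})$; both arguments are valid and yield the same conclusion.
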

This result was conjectured by May  and verified under the additional hypothesis that $px=0$ for some prime $p$, in \cite[Ch.~II Conj.~2.7 \& Thm.~6.2]{BMMS86}. In the case $R$ is the sphere spectrum, \Cref{thm:may-original} is equivalent to Nishida's nilpotence theorem. In contrast to the nilpotence theorem of \cite{DHS88}, \Cref{thm:may-original} does not require any knowledge of the complex cobordism of $R$, but we must add the hypothesis that $R$ is $H_\infty$.

We will prove the following result which implies \Cref{thm:may-original}\footnote{An elementary argument shows that \Cref{thm:may-original} implies \Cref{thm:may} as well.}:
% Suppose that May's original theorem is true and the conditions for the thm:may hold. Then some power of x has is zero in rational homology and hence some power of x is m-torsion for some m\geq 1. This implies that this power of x can only have essential image in H_*(R;F_p) for the finitely many primes p dividing m. This implies that a further power of x has zero image in H_*(R;F_p) for all primes p. Let y be the image of this power in H_*(R;Z) and note that my=0. Since y mod p  is zero for p|m we see that y=z\cdot \prod_{p|m} p, for some z in H_*(R;Z). So for k\gg 0 y^k=z^{k-1} z m \prod_{p|m} \ell for some \ell, but the right hand side is of the form m y \cdot z^{k-1} \ell which is a boundary. It follows that y^k has zero image in integral homology and hence the corresponding power of x has trivial image in H_*(R;k). It follows from May's original theorem that x is nilpotent. 
\begin{thmA}\label{thm:may}
	Suppose that $R$ is an $H_\infty$-ring spectrum and $x\in \pi_* R$ has nilpotent image, via the Hurewicz homomorphism, in $H_*(R;k)$ for $k=\mathbb{Q}$ and $k=\mathbb{Z}/p$ for each prime $p$. Then $x$ is nilpotent. 
\end{thmA}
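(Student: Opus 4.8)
The plan is to deduce \Cref{thm:may} by combining the nilpotence theorem of \cite{DHS88} with the bounded-nilpotence estimate for $K(n)$-local $H_\infty$-algebras over $E_n$ advertised in the abstract. The first step is to invoke the chromatic form of the nilpotence theorem: for a homotopy-associative ring spectrum $R$ (in particular any $H_\infty$-ring), an element $x\in\pi_*R$ is nilpotent if and only if, for every prime $p$ and every $0\le n\le\infty$, the image of $x$ in the Morava $K$-theory ring $K(n)_*R$ is nilpotent. With the standard conventions $K(0)=H\mathbb{Q}$ and $K(\infty)=H\mathbb{Z}/p$, the two extreme heights are handled immediately by hypothesis, since $x$ has nilpotent image in $H_*(R;\mathbb{Q})=K(0)_*R$ and in $H_*(R;\mathbb{Z}/p)=K(\infty)_*R$ for each $p$. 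Everything therefore reduces to the intermediate heights $1\le n<\infty$, which is exactly the range where ordinary homology carries no information and where power operations must enter.

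To treat a fixed prime $p$ and a fixed finite height $n\ge 1$, I would form the spectrum $A:=L_{K(n)}(E_n\wedge R)$. Since $E_n$ is $E_\infty$ and $R$ is $H_\infty$, and since $K(n)$-localization preserves such multiplicative structure, $A$ is a $K(n)$-local $H_\infty$-algebra over $E_n$, so the bounded-nilpotence estimate applies to it. Moreover the (homotopy) ring map $E_n\to K(n)$ given by reduction modulo the maximal ideal induces a ring homomorphism $\pi_*A\to K(n)_*R$ (it factors through $A$ because $K(n)\wedge R$ is $K(n)$-local), so it suffices to show that the image $\bar x$ of $x$ in $\pi_*A$ is nilpotent.

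The key point is that some power of $\bar x$ is torsion, and for this I rationalize. Because $E_n\otimes\mathbb{Q}$ is a rational spectrum, hence a wedge of rational Eilenberg--MacLane spectra, the Künneth theorem gives a ring isomorphism $\pi_*\bigl((E_n\wedge R)\otimes\mathbb{Q}\bigr)\cong\pi_*(E_n\otimes\mathbb{Q})\otimes_{\mathbb{Q}}H_*(R;\mathbb{Q})$, under which the image of $x$ is $1\otimes x_{\mathbb{Q}}$, where $x_{\mathbb{Q}}\in H_*(R;\mathbb{Q})$ is the rational Hurewicz image. As $x_{\mathbb{Q}}$ is nilpotent by hypothesis, so is $1\otimes x_{\mathbb{Q}}$; hence some power $x^M$ maps to zero in $\pi_*\bigl((E_n\wedge R)\otimes\mathbb{Q}\bigr)$, which is to say that the image of $x^M$ in $\pi_*(E_n\wedge R)$, and therefore in $\pi_*A$, is torsion. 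Applying the bounded-nilpotence estimate to this torsion class of the $K(n)$-local $H_\infty$-algebra $A$ shows that $\bar x^M$, and hence $\bar x$, is nilpotent in $\pi_*A$; pushing forward along $\pi_*A\to K(n)_*R$ gives nilpotence of $x$ in $K(n)_*R$. Running this over all primes $p$ and all finite $n\ge1$, and combining with the two endpoint cases, verifies the chromatic criterion and yields that $x$ is nilpotent.

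I expect the genuine content---and the main obstacle---to lie entirely in the bounded-nilpotence estimate for $K(n)$-local $H_\infty$-algebras over $E_n$: producing an \emph{explicit} exponent that renders a torsion class nilpotent requires a careful analysis of the power operations on $E_n$-homology (and the associated total-power and transfer arguments). By contrast, the reduction sketched above is formal, the only points needing attention being that $A$ genuinely inherits an $H_\infty$ $E_n$-algebra structure and that the comparison $\pi_*A\to K(n)_*R$ respects nilpotence, both of which are multiplicative bookkeeping rather than hard homotopy theory.
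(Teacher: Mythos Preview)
Your reduction is correct and matches the paper's approach essentially line for line: invoke the nilpotence theorem to reduce to the Morava $K$-theories, handle the endpoints by hypothesis, and for $1\le n<\infty$ pass to $T=L_{K(n)}(E_n\wedge R)$ and apply the torsion nilpotence bound (\Cref{thm:nilpotence}). The paper's only simplification over your argument is that it obtains torsion directly in $\pi_*R$ via the identification $H_*(R;\mathbb{Q})\cong\pi_*R\otimes\mathbb{Q}$, so one may replace $x$ by a torsion power at the outset and avoid the K\"unneth detour through $(E_n\wedge R)\otimes\mathbb{Q}$.
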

Indeed, since each of the above Hurewicz homomorphisms factors through the integral Hurewicz homomorphism, we see that May's conjecture follows from \Cref{thm:may}.

The outline of this note is as follows: In \Cref{sec:pfofthm:may} we reduce \Cref{thm:may} to the nilpotence theorem using designer-made power operations due to Rezk (\Cref{lem:power-ops}). These operations give explicit nilpotence bounds in a $K(n)$-local context (\Cref{thm:nilpotence}) which complete the proof of the main theorem. In \Cref{sec:power-ops} we build on Strickland's foundational work on operations in Lubin--Tate theory to provide a proof of \Cref{lem:power-ops}. We conclude with several applications which should be of independent interest as well as some speculative refinements of \Cref{thm:may} in \Cref{sec:apps}. The applications include: 
\begin{enumerate}
	\item The nilpotence of elements in bordism rings (\Cref{prop:bordism,thm:vanishing,thm:converse}).
	\item Results about the behavior of the Adams spectral sequence for $E_\infty$-ring spectra (\Cref{prop:ass}).
	\item The non-existence of $E_\infty$-ring structures on certain complex oriented ring spectra (\Cref{prop:non-realizability}).
	% \item  A partial analogue of Quillen's $\mathcal{F}$-isomorphism theorem for Lubin--Tate theories (\Cref{prop:quillen}).
\end{enumerate}

For one of these applications we will need the following fact: If $R$ is an $E_2$-ring and $x\in \pi_k R$ then $R[x^{-1}]$ is canonically an $E_2$-ring. In \Cref{sec:localization} we provide a quick proof of the more general statement for $E_n$-rings, provided $n$ is at least 2. 

In a preliminary version of this paper \cite[Prop.~4.3]{MNN} we observed that \Cref{thm:may} implied half of an analogue in Morava $E$-theory of Quillen's $\mathcal{F}$-isomorphism \cite[Thm.~7.1]{Qui71b}. We meanwhile found an independent proof of the full result, and this will be documented elsewhere.

%In an earlier preprint, we also showed that \Cref{thm:may} implied half of an analogue of Quillen's $\mathcal{F}$-isomorphism \cite[Thm.~7.1]{Qui71b} for Lubin--Tate theories. More precisely we showed, for a finite group $G$ and modulo nilpotent elements, every $E_n$-cohomology class of $BG$ is detected by restricting to the abelian subgroups of $G$. We have since discovered an argument which proves the full analogue of Quillen's $\mathcal{F}$-isomorphism theorem as well as several extensions. Since the new argument requires rather different methods and is completely independent of \Cref{thm:may}, we have decided to include it in a later paper.

{\bf Acknowledgements.}	 
\Cref{thm:may-original} is originally due to Mike Hopkins, who has known this result for some time. We would like to thank him for his blessing in publishing our own arguments below. We would also like to thank Charles Rezk, Tyler Lawson, and Jacob Lurie for several enlightening discussions. Finally, we thank the anonymous referee for carefully reading this paper.

\section{The proof of \Cref{thm:may}}\label{sec:pfofthm:may}

Throughout this section, the notations and assumptions of \Cref{thm:may} are in force. 

Recall that for each prime $p$ and positive integer $n$, there are 2-periodic ring spectra\footnote{Usually $K(n)$ denotes a $2(p^n-1)$-periodic theory. The $2$-periodic version can be obtained by a faithfully flat extension of the standard $2(p^n-1)$-periodic theory. Both variants have identical Bousfield classes and either variant can be used to detect nilpotence.} $K(n)$ and $E_n$ which are related by a map $E_n\rightarrow K(n)$ of ring spectra inducing the quotient map of the 
local ring $\pi_0 E_n$ to its residue field $\pi_0K(n)$. The first family
consists of the  Morava $K$-theories, which play an important role in the
Ravenel conjectures \cite{Rav84} and are especially amenable to computation.
The second family consists of Lubin--Tate theories which satisfy certain
universal properties that make them extremely rigid; in particular, each
of them admits an essentially unique $E_\infty$-algebra structure and a corresponding theory of power operations; see \Cref{sec:power-ops} for more details.

By the nilpotence theorem \cite[Thm.~3.i]{HoS98} if we can show that $x$ is nilpotent in $H_*(R;\bQ)$, $H_*(R;\bZ/p)$, and $K(n)_*R$ for each prime $p$ and positive integer $n$, then $x$ is nilpotent. Since $x$ has nilpotent image in $H_*(R;\bQ)\cong \pi_*R\otimes \bQ$, by replacing $x$ with a suitable power we may assume it is torsion. Now since $x$ is torsion it is zero in $H_*(R;\bQ)$ and by assumption, $x$ is nilpotent in $H_*(R;\bZ/p)$ for each prime $p$. To show $x$ is nilpotent in $K(n)_*R$, we will show it is nilpotent in the ring $\pi_*L_{K(n)}(E_n\wedge R)$ and then map to $K(n)_*R$. So \Cref{thm:may} will follow from the following theorem, applied to $T=L_{K(n)}(E_n\wedge R)$ and the image of $x$ in $T$ under the $E_n$-Hurewicz map.

% Just for clarification, I will include the proof that $x\in \pi_i R$ is torsion iff it dies under the rational Hurewicz homomorphism. Clearly all torsion elements die in a map to a torsion-free abelian group so we just have to check that if $mx\neq 0\in \pi_i R$ for all $m\in \bZ\setminus{0}$ then it has non-trivial image in rational homology. By assumption there is a sequence of essential maps $S^i\xrightarrow{x} R\xrightarrow{m} R$. Taking a cofinal sequence of $m$ we obtain an induced map $y\colon S^i\rightarrow R\otimes \bQ$, which represents the image of $x$ under the Hurewicz homomorphism. By compactness of the sphere and the exactness of colimits, if $y$ died, then $x$ has trivial image in some finite stage of the colimit which violates our assumption on $x$.

To simplify notation in what follows, we have put $E=E_n$, $\EH(X)=L_{K(n)}(E\wedge X)$, and $\EH_*(X)=\pi_*\EH(X)$. The `check' notation, e.g., $\EH$, is meant to remind the reader that we are working in a $K(n)$-local category.
In particular, the $T$ that appears in \Cref{thm:nilpotence} is $K(n)$-local;
see the next section for our conventions.

	\begin{thm}\label{thm:nilpotence}
		Suppose $T$ is a  $H_\infty$-$\EH$-algebra and $x\in \pi_j T$.	
		\begin{enumerate}
			\item \label{it:even-degrees} If $j$ is even and $p^m x=0$ then \[x^{(p+1)^m}=0.\]
			\item  \label{it:odd-degrees} If $j$ is odd then $x^2=0$.
		\end{enumerate}
	\end{thm}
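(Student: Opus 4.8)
The plan is to extract from the power operations of \Cref{lem:power-ops} a single relation in even degrees and then run a short induction on the torsion exponent $m$. The key lemma I would aim for is
\[
\text{if } j \text{ is even and } p^m x = 0, \text{ then } p^{m-1} x^{p+1} = 0.
\tag{$\star$}
\]
Granting $(\star)$, the bound in the first statement follows by induction on $m$: the case $m=1$ is exactly $(\star)$ with $m=1$, and for the inductive step I set $y = x^{p+1}$, which again lies in an even degree. By $(\star)$ we have $p^{m-1}y = 0$, so the inductive hypothesis applied to $y$ gives $y^{(p+1)^{m-1}} = 0$; since $y^{(p+1)^{m-1}} = x^{(p+1)^m}$, this is the desired vanishing. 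Thus the entire content of the even case is concentrated in $(\star)$, which trades one power of $p$ in the torsion order for one factor of $(p+1)$ in the multiplicative order.

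To prove $(\star)$ I would use the additive power operation furnished by \Cref{lem:power-ops}. Writing $\psi$ for this operation, the expected shape of the relation is a Frobenius congruence
\[
x^p = \psi(x) + p\,\theta(x),
\]
valid for $x$ in even degree, where $\psi$ is additive and $\theta$ is the associated secondary operation. Multiplying by $p^{m-1} x$ and using $p^m x = 0$ kills the term $p^m x\,\theta(x)$, leaving $p^{m-1} x^{p+1} = p^{m-1} x\,\psi(x)$. The remaining, and decisive, point is that the explicit description of $\psi$ in Lubin--Tate theory, via Strickland's computation of $\EH^0(B\Sigma_{p^k})$ modulo transfers, should force $p^{m-1} x\,\psi(x)$ to vanish on torsion classes. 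Pinning down exactly this vanishing, with the correct bookkeeping of powers of $p$, is the step I expect to be the main obstacle, since it is where the fine structure of Rezk's designer operations enters rather than any formal manipulation.

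For the second statement I would argue by reduction to the power operation in degree $2$. Graded-commutativity gives $x^2 = -x^2$ when $j$ is odd, hence $2x^2 = 0$; as $\pi_* T$ is a module over the $p$-complete ring $\pi_0 \EH$, the prime $2$ is invertible whenever $p$ is odd, and $x^2 = 0$ follows at once. The only remaining case is $p = 2$, where $x^2$ is $2$-torsion but one cannot conclude directly. Here I would invoke \Cref{lem:power-ops} again: the square $x^2$ is the restriction of the quadratic power operation applied to $x$ along the bottom cell of $D_2(S^j)$, and since $\EH^*(B\Sigma_2)$ is concentrated in even degrees while $j$ is odd, this restriction should land in the transfer ideal and be forced to zero. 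This yields $x^2 = 0$ directly, a strictly stronger conclusion than the sixth-power bound that the even case would produce for $x^2$.

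In summary, the induction and the odd-prime half of the second statement are purely formal; the two places where the $H_\infty$-structure is genuinely used are the Frobenius-type relation underlying $(\star)$ together with its refinement showing $p^{m-1}x\,\psi(x)=0$, and the even-degree vanishing of $\EH^*(B\Sigma_2)$ that handles $p=2$. The first of these is the crux, and I would expect the explicit bound $(p+1)^m$ to emerge precisely from tracking the interaction between the additive operation $\psi$ and the multiplication in $\pi_* T$ supplied by \Cref{lem:power-ops}.
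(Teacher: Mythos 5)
Your even-case skeleton coincides with the paper's: isolate the relation $(\star)$ (if $p^m x=0$ then $p^{m-1}x^{p+1}=0$) and induct on $m$ via $y=x^{p+1}$; that induction is correct (modulo the small point that the operations of \Cref{lem:power-ops} act only on $\pi_0 T$, so one must first reduce to $j=0$ by dividing by the image of the periodicity generator of $\pi_2 E$, as the paper does). The genuine gap is in your proof of $(\star)$. Multiplying $x^p=Q(x)+p\theta(x)$ by $p^{m-1}x$ and killing $p^m x\,\theta(x)$ indeed leaves $p^{m-1}x^{p+1}=p^{m-1}x\,Q(x)$, but this is no progress: the same identity shows that the vanishing of $p^{m-1}x\,Q(x)$ is \emph{equivalent} to $(\star)$, so the "main obstacle" you defer is the theorem itself. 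Moreover, $(\star)$ is not a formal consequence of the three properties in \Cref{lem:power-ops}, so no manipulation inside the single ring $\pi_0 T$ can close this gap: the ring $(\bZ/p)[x]/(x^{p+2})$ with $Q$ the Frobenius and $\theta\equiv 0$ satisfies all three properties and has $px=0$, yet $x^{p+1}\neq 0$. What is missing (note that your argument never invokes \Cref{lem:power-ops}.\ref{enum:zero}) is the paper's key maneuver: the identity \eqref{eqn:power-op},
\[
\theta(p^m x)=x^p\left(p^{pm-1}-p^{m-1}\right)+p^m\theta(x),
\]
is first derived in the free $H_\infty$-$\EH$-algebra $\EH(\bP S^0)$, where the division by $p$ implicit in its derivation is legitimate because $\EH_0(\bP S^0)$ is torsion free by \cite[Thm.~1.1]{Str98}, and is then transported to an arbitrary $T$ by naturality, applied to the extension $P(x)$ of $x\colon S^0\to T$. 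Only then does one set $p^m x=0$: the left-hand side becomes $\theta(0)=0$ by \Cref{lem:power-ops}.\ref{enum:zero}, and multiplying by $x$ kills both $p^m x\,\theta(x)$ and $p^{pm-1}x^{p+1}$ (since $pm-1\geq m$), leaving $p^{m-1}x^{p+1}=0$. So the decisive input is not a finer structural description of $Q$ "modulo transfers," but naturality combined with torsion-freeness of the universal example.

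On the odd case: your argument at odd primes (graded commutativity gives $2x^2=0$, and $2$ is a unit in the local $p$-complete ring $\pi_0 E$, hence in any $\pi_0 E$-module) is complete and correct, and is a nice sharpening of the paper's footnote, which only extracts $x^6=0$ from part (1). At $p=2$, however, your sketch is not an argument: the evenness of $\EH^*({B\Sigma_2}_+)$ says nothing direct about $D_2(S^j)$ for $j$ odd, since that extended power is not built from ${B\Sigma_2}_+$ but is (up to suspension) the Thom spectrum of $j$ copies of the sign line bundle over $B\Sigma_2$, whose $\EH$-cohomology is genuinely twisted. Here you should either carry out that Thom-spectrum computation or do what the paper does, namely cite \cite[Prop.~3.14]{Rez09}.
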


	Our proof of this will depend on the following unpublished result of Rezk \cite[p.~12]{Rez10b} which we will prove in \Cref{sec:power-ops}.

	\begin{lemma}\label{lem:power-ops} 
		 Suppose $T$ is a  $H_\infty$-$\EH$-algebra. Then there are operations $Q$ and $\theta$ acting on $\pi_0 T$ and natural with respect to maps of $H_\infty$-$\EH$-algebras satisfying:
		\begin{enumerate}
			\item\label{enum:formula} $(-)^p=Q(-)+p\theta(-)$.
			\item\label{enum:additivity} $Q$ is additive.
			\item\label{enum:zero} $\theta(0)=0$. 
		\end{enumerate}
	\end{lemma}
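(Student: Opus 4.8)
The goal is to construct, on $\pi_0 T$ for any $H_\infty$-$\EH$-algebra $T$, two operations $Q$ and $\theta$ satisfying properties (1)–(3). My plan is to obtain these from the total power operation structure that $E = E_n$ carries in the $K(n)$-local category, which Strickland analyzed and Rezk refined. The starting point is that for a $K(n)$-local $E_\infty$-$\EH$-algebra $T$ there is a total power operation $\psi\colon \pi_0 T \to \pi_0(\EH(B\Sigma_p) \otimes_{\EH} T)$, or more precisely a map landing in $\EH^0(B\Sigma_p)\otimes_{\pi_0 E} \pi_0 T$ after the relevant completeness/flatness identifications. Strickland's computation of $\EH^0(B\Sigma_p)$ (equivalently $E^0(B\Sigma_p)$ $K(n)$-locally) gives a finitely generated $\pi_0 E$-module, and Rezk's observation is that modulo the image of the transfer from the trivial subgroup one extracts a power operation; the Frobenius congruence then packages the mod-$p$ reduction of the $p$-th power map.

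The key steps, in order, are as follows. First I would recall the $H_\infty$-structure on $T$ over $\EH$, which supplies for each $m$ a natural extended-power map $\EH(B\Sigma_p)_+ \wedge_{\EH} T^{\wedge_{\EH} p}_{h\Sigma_p} \to T$ and in particular, restricting along the diagonal, a total power operation $P\colon \pi_0 T \to \pi_0 F(B\Sigma_{p+}, T)_{K(n)} \cong \EH^0(B\Sigma_p)\widehat{\otimes}_{\pi_0 E}\pi_0 T$. Second, I would invoke Strickland's structural result identifying $\EH^0(B\Sigma_p)$: after inverting nothing but using the $K(n)$-local finiteness, this ring is free of finite rank over $\pi_0 E$ and contains a distinguished class coming from the defining representation, together with the image of the transfer $\mathrm{tr}\colon \EH^0(\ast) \to \EH^0(B\Sigma_p)$ which is $p$ times a unit multiple of a canonical generator. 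Third, I would define $Q$ and $\theta$ as the two coordinates of $P$ with respect to a chosen splitting $\EH^0(B\Sigma_p) \cong \pi_0 E \oplus (\text{transfer part})$: writing $P(x) = Q(x)\cdot 1 + \theta(x)\cdot \tau$ where $\tau$ is the transfer generator and $p\tau$ matches the class detecting the $p$-th power. Property (1), the formula $(-)^p = Q(-) + p\,\theta(-)$, then comes from restricting $P(x)$ along the inclusion of a point $\ast \hookrightarrow B\Sigma_p$, which recovers the $p$-th power, combined with the fact that the transfer contributes the factor $p$; this is exactly the content of the Frobenius/transfer relation that Rezk isolates.

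The remaining two properties I would verify directly from naturality and the structure of $P$. Property (2), additivity of $Q$, follows because the total power operation $P$ is multiplicative and its failure to be additive is governed entirely by the transfer ideal (the cross-terms $\binom{p}{i}x^iy^{p-i}$ all carry a factor of $p$ and hence land in the $\tau$-component), so projecting onto the unit coordinate kills the non-additive part; this is the standard argument that the ``additive'' piece of a total power operation, i.e.\ the Frobenius-lift component, is additive. Property (3), $\theta(0) = 0$, is immediate since $P(0) = 0$ by the basepoint/unit normalization of the extended-power construction, so both coordinates vanish at $0$. The main obstacle will be step two: pinning down the precise structure of $\EH^0(B\Sigma_p)$ and the exact transfer relation well enough to justify the clean splitting into $Q$ and $\theta$ coordinates with the factor of $p$ appearing as stated. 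This requires care with the $K(n)$-local completion, the passage from the $E_\infty$ total power operation to an operation defined on a mere $H_\infty$-algebra (so that only the $\Sigma_p$-level structure, not the full operad, is used), and Strickland's identification of the relevant module generators — and it is precisely this computation that \Cref{sec:power-ops} is set up to carry out.
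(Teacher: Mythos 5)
Your outline correctly identifies the two external ingredients (Strickland's computation and Rezk's Frobenius congruence), but the mechanism you build from them breaks at exactly the point you flag as the ``main obstacle,'' and it cannot be repaired in the form you state. At height $n\geq 2$ there is no splitting $\EH^0({B\Sigma_p}_+)\cong \pi_0E\oplus \pi_0E\cdot \tau$ with a rank-one ``transfer part,'' and hence no expression $P(x)=Q(x)\cdot 1+\theta(x)\cdot \tau$ with scalar coordinates. By Strickland, if $J$ denotes the ideal generated by the transfers from $\Sigma_i\times\Sigma_{p-i}$ for $0<i<p$, the quotient $E^0({B\Sigma_p}_+)/J$ is finitely generated \emph{free} over $\pi_0E$ but of rank $1+p+\cdots+p^{n-1}$ (it carries the universal degree-$p$ subgroup of the formal group), so the total power operation modulo transfers has many coordinates and there are many additive operations, not a single preferred $Q$ dual to the unit. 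Your picture is the height-one ($K$-theory) special case, where the additive quotient does have rank one. Two further inaccuracies compound this: the transfer from the trivial subgroup already lies in $J$ by transitivity of transfers and restricts to the basepoint as multiplication by $p!$, not $p$ (so ``$\mathrm{tr}(1)=p\cdot(\text{unit})\cdot\tau$'' is not available as stated); and your identification $\pi_0 F({B\Sigma_p}_+,T)\cong \EH^0(B\Sigma_p)\widehat{\otimes}_{\pi_0E}\pi_0T$ requires flatness hypotheses on $\pi_0T$ that a general $H_\infty$-$\EH$-algebra does not satisfy.

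The paper's proof avoids both problems and shows what the correct replacement for your splitting is. Operations are parametrized by classes $\alpha\in \EH_0({B\Sigma_p}_+)$ directly (composing $\alpha$ with $D_p(x)$ and the structure map), and since $\EH_*({B\Sigma_p}_+)$ is even and finitely generated free, Strickland's duality identifies these with $\pi_0E$-linear functionals $E^0({B\Sigma_p}_+)\to\pi_0E$ --- no K\"unneth isomorphism involving $T$ is ever needed. Additive operations are exactly the functionals that kill $J$. Then, writing $\varepsilon$ for evaluation at the basepoint (the functional dual to $(-)^p$) and $r$ for the quotient by $J$, Rezk's congruence [Rez09, Prop.~10.3] says $\varepsilon$ agrees mod $p$ with a map factoring through $r$, and the \emph{freeness} of $E^0({B\Sigma_p}_+)/J$ is used only to choose a $\pi_0E$-module section $s$ of $r$. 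One sets $Q\leftrightarrow \varepsilon\circ s\circ r$ (additive, since it factors through $r$) and observes $\varepsilon-\varepsilon\circ s\circ r=p\cdot f$, defining $\theta\leftrightarrow f$. In particular $\theta$ depends on the non-canonical choice of $s$ and is \emph{not} claimed to be additive --- which is precisely why the lemma asserts additivity only for $Q$; your proposal implicitly treats $\theta$ as a canonical coordinate, which it is not. Your verification of property (3) (the extended power of the null map factors through a point) does match the paper's argument.
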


	\begin{proof}[Proof of \Cref{thm:nilpotence}]
		The claim about odd degree elements is precisely
		\cite[Prop.~3.14]{Rez09}, so we may assume $j$ is even\footnote{Since
		$2x^2 = 0$ for $x$ in odd degrees, we could alternatively appeal to 
		\Cref{thm:nilpotence}.\ref{it:even-degrees} to conclude $x^6 = 0$. This weakening of \Cref{thm:nilpotence}.\ref{it:odd-degrees} suffices for proving \Cref{thm:may}, but misses Rezk's sharp bound.}. Since $\pi_* T$ is a $\pi_* E$-algebra, either the periodicity generator in $\pi_2 E$ has non-trivial image in $\pi_2 T$ or $\pi_*T=0$. In the latter case, the theorem holds vacuously, so by dividing by a suitable power of the periodicity generator, we can furthermore assume that $j$ is zero.

		It follows from the first two items in \Cref{lem:power-ops} that if $p$ were not a zero-divisor in $\pi_0 T$ then 
		\begin{align*}
		\theta(p^m x) &=p^{pm-1}x^p-p^{m-1}Q(x)\\
		% &=p^{m-1}\left( p^{(p-1)m}x^p - Q(x)\right)\\
		&=p^{m-1}\left((p^{(p-1)m}-1)x^p + x^p - Q(x)\right )
		% &=(p^{pm-1}-p^{m-1})x^p+ p^m\theta(x)
		\end{align*}
		which when combined with
		\[
		p^m\theta(x) =p^{m-1}\left( x^p - Q(x) \right)
		\]
		yields
		\begin{equation}\label{eqn:power-op}
			\theta(p^m x)=x^p(p^{pm-1}-p^{m-1})+p^m\theta(x).
		\end{equation}
		
		To see that \eqref{eqn:power-op} also holds in rings with $p$-torsion,
		consider $x\in\pi_0T$ as a map $x\colon S^0\to T$. Since the target is an $H_\infty$-$\EH$-algebra, this map canonically extends, up to homotopy, through the free $H_\infty$-$\EH$-algebra on $S^0$:
		\begin{equation}\label{dia:universal}
		\xymatrix{
S^0 \ar[d]^\iota \ar[r]^{x} & T \\
\EH ( \bP S^0) \ar@{-->}_{P(x)}[ru]
		}
	\end{equation}
		Since $\EH_0(\bP S^0)$ is torsion free \cite[Thm.~1.1]{Str98}, \eqref{eqn:power-op} holds in $\EH_0 (\bP S^0)$ with $\iota$ in place of $x$. After applying $\pi_0$ to Figure \eqref{dia:universal}, $P(x)$ induces a ring map sending $Q(\iota)$ and $\theta(\iota)$ to $Q(x)$ and $\theta(x)$ respectively, so \eqref{eqn:power-op} holds in $\pi_0 T$.

		Now, since $p^m x=0$, by multiplying \eqref{eqn:power-op} by $x$ and using \Cref{lem:power-ops}.\ref{enum:zero} we see that $p^{m-1}x^{p+1}=0$. The theorem now follows by induction on $m$.
	\end{proof}

	\section{Power operations in Morava $E$-theory}\label{sec:power-ops}
	Before  proving \Cref{lem:power-ops}, we first recollect enough results about the theory of $E_\infty$- and $H_\infty$-algebras from \cite{BMMS86,EKMM97} to define their variants in $E$ and $\EH$-modules.

	Recall that the category of $E_\infty$-ring spectra is equivalent to the
	category of algebras over the monad \[\bP(-)=\bigvee_{n\geq 0}
	{\sO(n)}_+\wedge_{\Sigma_n} (-)^{\wedge n},\] acting on $S$-modules. Here
	$\sO(n)$ is the $n$th space of an $E_\infty$-operad, i.e., any operad weakly
	equivalent to the commutative operad such that $\sO(n)$ is a free
	$\Sigma_n$-space. The structure maps for the monad are derived from the
	structure maps for the operad in a straightforward way \cite[\S 11]{Rez97}.
	The category of such algebras forms a model category and any two choices of
	$E_\infty$-operad yield Quillen equivalent models \cite[Thm.~1.6]{GoH04}. In
	fact, any such category is Quillen equivalent to a category of strictly
	commutative $S$-algebras. 

	The monad $\bP$ descends to a monad on the homotopy category of $S$-modules and the category of $H_\infty$-ring spectra is the category of algebras for this monad. Such spectra admit all of the structure maps of $E_\infty$-ring spectra, but these maps only satisfy the required coherence conditions up to homotopy. There is a  forgetful functor from the homotopy category of $E_\infty$-ring spectra to $H_\infty$-ring spectra which endows each $E_\infty$-ring spectrum with power operations as defined below, see  \cite{JoN14} for more details.

	 As shown in \cite{GoH04,GoH05}, each Lubin--Tate theory $E$ admits an essentially unique $E_\infty$-structure realizing the $\pi_* E$-algebra $E_*(E)$. Applying standard results from \cite{EKMM97}, we see that after taking a commutative model for $E$, the category of $E$-modules is a topological symmetric monoidal model category with unit $E$ and smash product $\wedge_E$. The category of commutative $E$-algebras is Quillen equivalent to the category of $E_\infty$-algebras in this category, which are in turn equivalent to the category of algebras over the following monad:
	\[
	 \bP_E(-)=\bigvee_{n\geq 0} {\sO(n)}_+\wedge_{\Sigma_n} (-)^{\wedge_E n}.
	\]

	By \cite[Ch.~VIII Lem.~2.7]{EKMM97}, $\bP_E$ respects $K(n)$-equivalences and descends to a monad $\bP_{\EH}$ on the homotopy category of $K(n)$-local $E$-modules. The smash product in this category is the $K(n)$-localization of the smash product of $E$-modules, i.e., $X\wedge_{\EH} Y:= L_{K(n)}(X\wedge_E Y)$. We will call the category of algebras over $\bP_{\EH}$ the category of $H_\infty$-$\EH$-algebras. Since the equivariant natural equivalences \[(\EH(-))^{\wedge_{\EH} n}\cong \EH((-)^{\wedge n})\] induce a natural equivalence
	\begin{equation}\label{eqn:base-change}
		\bP_{\EH}(\EH(-))\cong \EH(\bP(-)),
	\end{equation}
	we see that if $R$ is an $H_\infty$-ring spectrum then $\EH(R)$ is an $H_\infty$-$\EH$-algebra and acceptable input for \Cref{thm:nilpotence}. 

	Given an $H_\infty$-$\EH$-algebra $T$, a map $x\colon S^0\rightarrow  T$
	of spectra (i.e., a map of $E$-modules $E \to T$), and an $\alpha \in \EH_0({B\Sigma_p}_+)$ we obtain an operation 
	\[
		Q_\alpha \colon \pi_0 T \rightarrow \pi_0 T
	\]
	defined by the following composite:
	\[
	Q_\alpha(x)\, :\, S^0\xrightarrow{\alpha} \EH({B\Sigma_p}_+)\cong \sO(p)_+ \wedge_{\Sigma_p} E^{\wedge_\EH p}\xrightarrow{D_p(x)} \sO(p)_+ \wedge_{\Sigma_p} T^{\wedge_\EH p} \xrightarrow{\mu_p}T.\] 
	Here $D_p$ is the functor associated to the $p$th extended power construction in $E$-modules and $\mu_p$ is the $H_\infty$-$\EH$ structure map on $T$. It is clear that, by construction, $Q_\alpha$ is natural in maps of $H_\infty$-$\EH$-algebras.

	\begin{example}\label{ex:pth-power}
		The inclusion of the base point into $B\Sigma_p$ and the
		$E$-Hurewicz homomorphism induce a map $i\colon S^0\rightarrow \EH({B\Sigma_p}_+)$. The associated operation is the $p$th power map. 
	\end{example}

	Since $\EH_*({B\Sigma_p}_+)$ is a finitely generated free $E_*$-module and concentrated in even degrees, we have a duality isomorphism \cite[Thm.~3.2]{Str98}: \[
	\EH_0({B\Sigma_p}_+)\cong \mathrm{Mod}_{\pi_0 E}(E^0({B\Sigma_p}_+), \pi_0 E).
	\]
	Therefore we can construct operations by defining the corresponding linear maps on $E^0({B\Sigma_p}_+)$.

	By an elementary diagram chase, the additive operations correspond to the subgroup $\Gamma$ of $\EH_0({B\Sigma_p}_+)$ defined by the following exact sequence:
	\[
	 0\rightarrow \Gamma \rightarrow \EH_0({B\Sigma_p}_+)\rightarrow \prod_{0<i<p}\EH_0((B\Sigma_i\times B\Sigma_{p-i})_+),
	 \] where the right hand map is the product of the transfer homomorphisms, compare \cite[\S 6]{Rez09}. To rephrase this in terms of cohomology, let $J$ be the ideal of $E^0({B\Sigma_p}_+)$ generated by the cohomological transfer maps. Then the additive operations correspond to those $\pi_0E$-module maps  $E^0({B\Sigma_p}_+)\rightarrow \pi_0 E$ which factor through the quotient $E^0({B\Sigma_p}_+)/J$.

		\begin{proof}[Proof of \Cref{lem:power-ops}]
		By \cite[Prop.~10.3]{Rez09} we have a commutative solid arrow diagram of $\pi_0 E$-algebras:
		\[
		\xymatrix{
E^0( {B \Sigma_p}_+) \ar[d]^{\varepsilon} \ar[r]^r & E^0({B\Sigma_p}_+)/J
 \ar@{.>}@/^1pc/[l]^s\ar[d]^{\phi_2} \\  
 \pi_0 E \ar[r]^{\phi_1} &  \pi_0 E/p
		}
		\]
		Here $\varepsilon$ is the map induced by the inclusion of a base point into $B\Sigma_p$. It is dual to the map $i$ from \Cref{ex:pth-power} and corresponds to the $p$th power operation $(-)^p$. The maps $r$ and $\phi_1$ are the obvious quotient maps, while $\phi_2$ is the unique map making the diagram commute.

		By applying \cite[Thm.~1.1]{Str98} once again, we know that $E^0({B\Sigma_p}_+)/J$ is a finitely generated free $\pi_0 E$-module. So the map $r$ admits a section $s$ of $\pi_0 E$-modules. By the discussion above, the composite map $\varepsilon\circ s\circ r$ determines an additive operation $Q$. Moreover
		\begin{align*}
			\phi_1 \circ \varepsilon \circ (\mathrm{Id}- s\circ r) &= \phi_2\circ r\circ (\mathrm{Id}-s\circ r)\\
			 &=\phi_2 \circ (r-r)\\
			 &=0.
		\end{align*}
		It follows that 
		\begin{equation}\label{eq:dual-formula}
			\epsilon -\epsilon\circ s\circ r= p \cdot f
		\end{equation} 
		for some homomorphism $f\colon E^0(B\Sigma_p)\rightarrow E^0$. If we let $\theta$ be the operation corresponding to $f$, then Lemmas \ref{lem:power-ops}.\ref{enum:formula} and \ref{lem:power-ops}.\ref{enum:additivity} will follow from \eqref{eq:dual-formula} and our definitions.

		 To prove \Cref{lem:power-ops}.\ref{enum:zero}, suppose $x=0\in \EH_0 R$. Then the extended power $D_p(x)$ factors through \[{E\Sigma_p}_+\wedge_{\Sigma_p} *^{\wedge_E p}\simeq *\] and $Q_\alpha(0)=0$ for all $\alpha$. 
	\end{proof}

\section{Applications}\label{sec:apps}

We collect some applications of our main result in this
section. Although all results can be applied toward $H_\infty$-ring spectra, we state them in terms of $E_\infty$-ring spectra since that is the case of greatest interest.

\begin{prop}\label{prop:odd-degrees}
	Suppose $x\in \pi_* R$ is an \emph{odd degree} element in the homotopy
	groups of an $E_\infty$-ring spectrum $R$ which has nilpotent image in $H_*(R;\bZ/2)$. Then $x$ is nilpotent.
\end{prop}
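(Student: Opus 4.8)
The plan is to reduce \Cref{prop:odd-degrees} to \Cref{thm:may}. Since $R$ is $E_\infty$ it is in particular $H_\infty$ via the forgetful functor discussed above, so \Cref{thm:may} applies once we verify that $x$ has nilpotent image under the Hurewicz homomorphism in $H_*(R;k)$ for $k=\bQ$ and $k=\bZ/p$ for every prime $p$. The hypothesis supplies exactly this for $k=\bZ/2$, so the only real content is to treat the rationals and the odd primes.

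The single ingredient I would use is graded commutativity of $\pi_* R$. Because $x$ lies in odd degree, $x\cdot x=(-1)^{|x|^2}x\cdot x=-x^2$, whence
\[
2x^2=0.
\]
This one relation disposes of every field other than $\bZ/2$ at once. In $H_*(R;\bQ)\cong \pi_*R\otimes\bQ$ the group is torsion free, so the image of $x^2$ vanishes; for an odd prime $p$ the element $2$ is a unit in $H_*(R;\bZ/p)$, so $2x^2=0$ again forces the image of $x^2$ to be zero. In all of these cases the Hurewicz image of $x$ therefore squares to zero, and is a fortiori nilpotent.

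With the hypothesis covering $k=\bZ/2$ and the computation above covering $k=\bQ$ and the odd primes, every hypothesis of \Cref{thm:may} is met, and I would conclude that $x$ is nilpotent. There is no serious obstacle here: the only point requiring any care is the bookkeeping observation that a single torsion relation coming from graded commutativity simultaneously kills the image in the rational homology and in every odd-primary mod-$p$ homology, leaving precisely the $p=2$ case to be assumed.
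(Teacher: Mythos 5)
Your proposal is correct and is essentially the paper's own proof: both reduce to \Cref{thm:may} and use graded commutativity (the sign rule) to see that an odd-degree element squares to $2$-torsion, which kills its Hurewicz image rationally and at odd primes. Whether one applies the sign rule in $\pi_*R$ and pushes forward, as you do, or directly to the image in $H_*(R;k)$, as the paper does, is an immaterial difference.
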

\begin{proof}
	By \Cref{thm:may} it suffices to show that $x$ has square zero image in
	rational and mod-$p$ homology for $p$ odd. This is a consequence of the fact
	that the homotopy groups of a commutative ring spectrum are graded-commutative, so odd degree elements square to 2-torsion by the sign rule. 
\end{proof}

\begin{prop}\label{prop:torsion-rings-are-dissonant}
	Suppose $R$ is an $E_\infty$-ring spectrum such that $0=m\cdot 1\in \pi_0 R$
	for some $m\neq 0$. Then, for every prime $p$ and non-negative integer $n$, $K(n)_* R=0$.
\end{prop}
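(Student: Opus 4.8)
The plan is to prove that the unit $1 \in K(n)_0 R$ is zero; since $K(n)_* R$ is a graded ring, $1 = 0$ forces $K(n)_* R = 0$. The only hypothesis available is the single relation $m \cdot 1 = 0$ in $\pi_0 R$, and the idea is to exploit it along a suitable unital ring map out of $\pi_0 R$, the choice of which depends on whether $n = 0$.

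For $n = 0$ one has $K(0)_* R = H_*(R;\bQ)$, which is a $\bQ$-algebra; as $m \neq 0$ is invertible there, the relation $m \cdot 1 = 0$ immediately gives $1 = 0$, whence $K(0)_* R = 0$. This disposes of the rational case, where no power operations are needed.

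The essential case is $n \geq 1$. Fixing the prime $p$, I would set $E = E_n$ and $T = \EH(R) = L_{K(n)}(E \wedge R)$; by the discussion surrounding \eqref{eqn:base-change}, $T$ is an $H_\infty$-$\EH$-algebra because $R$, being $E_\infty$, is in particular $H_\infty$. The $E$-Hurewicz map $\pi_0 R \to \pi_0 T = \EH_0 R$ is a unital ring homomorphism, so $m \cdot 1 = 0$ in $\pi_0 T$. Writing $m = p^a m'$ with $p \nmid m'$, the integer $m'$ is a unit in the local ring $\pi_0 E$ (whose residue field has characteristic $p$), hence a unit in the $\pi_0 E$-algebra $\pi_0 T$; cancelling it leaves $p^a \cdot 1 = 0$. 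If $a = 0$ this already reads $1 = 0$, while if $a \geq 1$ I would apply \Cref{thm:nilpotence}.\ref{it:even-degrees} to $x = 1 \in \pi_0 T$ (degree $0$ is even and $p^a \cdot 1 = 0$) to obtain $1 = 1^{(p+1)^a} = 0$. Either way $1 = 0$ in $\pi_0 T$.

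Finally I would transport this vanishing to $K(n)$: the ring map $E \to K(n)$ induces a unital ring homomorphism $\EH_* R \to K(n)_* R$ sending $1$ to $1$, so $1 = 0$ in $\EH_0 R$ forces $1 = 0$ in $K(n)_0 R$ and hence $K(n)_* R = 0$. The main obstacle is exactly the case $p \mid m$ with $n \geq 1$: there the relation $m \cdot 1 = 0$ holds automatically in the characteristic-$p$ ring $K(n)_* R$ and carries no information, so one is forced to lift to the characteristic-zero ground ring $\pi_0 E$, where it becomes the genuine relation $p^a \cdot 1 = 0$ and Rezk's power operation underlying \Cref{thm:nilpotence} converts it into nilpotence of the unit. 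Everything else is the routine bookkeeping that the maps in play are unital ring homomorphisms.
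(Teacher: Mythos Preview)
Your proof is correct and follows the same approach as the paper's one-line argument, which simply invokes \Cref{thm:nilpotence} with $T=\EH(R)$ to conclude that $1$ is nilpotent (hence zero) in $K(n)_*R$. You have spelled out the steps the paper leaves implicit: the trivial $n=0$ case, the passage from $m\cdot 1=0$ to $p^a\cdot 1=0$ using that the prime-to-$p$ part of $m$ is a unit in the local ring $\pi_0 E$, and the transport from $\EH_*R$ to $K(n)_*R$ via the ring map $E\to K(n)$.
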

\begin{proof}
	By \Cref{thm:nilpotence} applied with $T=\EH_n(R)$ , the image of $1$ in $K(n)_* R$ is nilpotent. 
\end{proof}

\begin{remark}
	The previous proposition immediately implies the following observation of
	Lawson: Any finite $E_\infty$-ring spectrum $R$ either has type 0,
	i.e., $H_*(R;\bQ)\neq 0$, or is weakly contractible. Indeed, if $H_*(R;\bQ)\cong \pi_*R\otimes \bQ=0$ then \Cref{prop:torsion-rings-are-dissonant} implies $K(n)_*R=0$ for every prime $p$ and non-negative integer $n$. Since the two-periodic Morava $K$-theory spectrum splits as a wedge of suspensions of the $2p^n-2$-periodic Morava $K$-theory spectrum $\overline{K}(n)$, we see that $\overline{K}(n)_*(R)=0$. 

	Since $R$ is finite, the Atiyah-Hirzebruch spectral sequence for $\overline{K}(n)_*R$ collapses to $H_*(R;\bZ/p)[v_n^{\pm 1}]$ for $n\gg 0$. This forces $H_*(R;\bZ/p)=0$ and hence $R_p$, the $p$-completion of $R$, is weakly contractible for all primes $p$. This combined with the fact that $\pi_* R$ is torsion implies that $R$ is weakly contractible.
\end{remark} 

\subsection{Applications to bordism}
	Recall that $BGL_1S$ is the classifying space for stable spherical
	fibrations \cite{May77}. This is an infinite loop space and associated to
	any infinite loop map $G\rightarrow GL_1 S$ is an ${E}_{\infty}$ Thom
	spectrum $MG$ \cite[Ch.~IX]{LMS86}.  Standard examples include the
	$J$-homomorphisms from $\mathit{SO}$, $\mathit{Spin}$, $\mathit{String}$ and
	their complex analogues. The homotopy groups of these geometric Thom spectra
	correspond to the bordism rings of the corresponding categories of (compact) manifolds \cite[Ch.~IV]{Rud98}. In this language, if a $G$-manifold $M$ represents a bordism class $[M]\in \pi_* MG$, then the nilpotence of $[M]$ is equivalent to the statement that the cartesian powers $M^n$ bound $G$-manifolds for all sufficiently large $n$.

	Now all $G$-manifolds are oriented if and only if $MG$ is $\bZ$-orientable. A choice of orientation determines a Thom isomorphism $H_*(BG;\bZ)\cong H_*(MG;\bZ)$. Even if $G$ does not arise from one of our geometric examples, $MG$ is $\bZ$-orientable if and only if $\pi_0 MG\cong \bZ$, and otherwise one has $\pi_0 MG\cong \bZ/2$ \cite[Ch.~IX, Prop.~4.5]{May77}. The latter case occurs if and only if the map $f\colon G\rightarrow GL_1 S$ does not lift to  $SL_1 S$, the connected component of the identity in $GL_1S$. 

	In the oriented case, we have Thom isomorphisms $ H_*(BG;k)\cong H_*(MG;k)$ for any field $k$. In the geometric examples, this happens when the classifying map \[f\colon G\rightarrow O\] lifts to $\mathit{SO}$. In these cases, the Thom isomorphism can be used to characterize the Hurewicz image of a $G$-bordism class $[M]$ in $H_*(MG;k)$ in terms of the $k$-characteristic numbers of $M$ (at least if $G$ is of finite type). These characteristic numbers can be calculated by pairing the fundamental class of $M$ with the characteristic classes of the stable normal bundle\footnote{More precisely, we are thinking of the normal characteristic numbers. Since the normal characteristic classes can be written in terms of tangential characteristic classes and vice versa, one set of numbers determines the other. In particular, all of the normal characteristic numbers vanish if and only if all of the tangential characteristic numbers vanish.} of $M$ \cite[p.401--402]{Swi02}. In particular, $[M]$ has trivial image in $H_*(MG;k)$ if and only if the $k$-characteristic numbers of $M$ vanish.

	In many important cases, these characteristic numbers have names. For example, when $M$ is a manifold, not necessarily oriented, it still admits a fundamental class in $k=\bZ/2$-homology and the $\bZ/2$-characteristic numbers of $M$ are the \emph{Stiefel-Whitney numbers}. These numbers determine the image of $[M]$ in $H_*(MO;\bZ/2)$. Stably complex manifolds, i.e., $U$-manifolds, have fundamental classes in integral homology and their images in $H_*(MU;\bZ)$ are described in terms of their \emph{Chern numbers}. Oriented manifolds also have \emph{Pontryagin numbers} which determine their image in $H_*(MSO;\bZ[1/2])\hookrightarrow H_*(MSO;\bQ)$. 

	The previous discussion, \Cref{thm:may} and \Cref{prop:odd-degrees} now imply the following:
	\begin{prop}\label{prop:bordism}
		Suppose $G\rightarrow \mathit{SO}$ is a map of infinite
		loop spaces and $G$ is of finite type (e.g., the 2-connected cover $G=\mathit{Spin}$ or the 6-connected cover $\mathit{String}$). If $M$ is a $G$-manifold whose rational and $\bZ/p$-characteristic numbers vanish for all primes $p$, then $M^n$ bounds a $G$-manifold for all sufficiently large $n$. 

		Moreover, if $M$ is an odd-dimensional manifold, then it suffices that the $\bZ/2$-characteristic numbers vanish.
	\end{prop}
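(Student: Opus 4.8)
The plan is to recognize this proposition as an essentially formal consequence of \Cref{thm:may} and \Cref{prop:odd-degrees}, once the geometric hypotheses are translated into the homotopy-theoretic language set up above. The only genuine inputs are (i) that $MG$ is an $E_\infty$-ring spectrum, (ii) that the Hurewicz image of $[M]$ is detected by characteristic numbers, and (iii) the dictionary between ring-nilpotence in $\pi_* MG$ and the bounding of cartesian powers.

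First I would observe that the composite $G\to \mathit{SO} \to GL_1 S$ (via the $J$-homomorphism) is a map of infinite loop spaces, so the associated Thom spectrum $MG$ is $E_\infty$, hence in particular $H_\infty$; this is the class of ring spectra to which \Cref{thm:may} applies. Because the classifying map $G\to O$ lifts through $\mathit{SO}$, the spectrum $MG$ is orientable, so for every field $k$ we have a Thom isomorphism $H_*(BG;k)\cong H_*(MG;k)$. Using that $G$ is of finite type, this identifies the Hurewicz image of a bordism class $[M]\in\pi_* MG$ with the collection of $k$-characteristic numbers of $M$; in particular the hypothesis that the rational and $\bZ/p$-characteristic numbers vanish says exactly that $[M]$ maps to zero — and \emph{a fortiori} to a nilpotent element — in $H_*(MG;\bQ)$ and in $H_*(MG;\bZ/p)$ for every prime $p$.

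Next I would apply \Cref{thm:may} to $R=MG$ and $x=[M]$ to conclude that $[M]$ is nilpotent in the ring $\pi_* MG$. Since the ring structure is induced by the cartesian product of manifolds, one has $[M]^n=[M^{\times n}]$, so nilpotence of $[M]$ is precisely the assertion that $M^{\times n}$ bounds a $G$-manifold for all sufficiently large $n$, which is the first claim. For the second claim, when $\dim M$ is odd the class $[M]$ lives in an odd homotopy degree, so I would instead invoke \Cref{prop:odd-degrees}: vanishing of the $\bZ/2$-characteristic numbers alone makes the Hurewicz image of $[M]$ in $H_*(MG;\bZ/2)$ nilpotent, and that proposition then forces $[M]$ to be nilpotent, giving the same bounding conclusion.

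The argument is almost entirely a translation, so there is no deep obstacle; the points that require care are bookkeeping rather than substance. The main one is ensuring that the hypotheses are used in exactly the right place: the infinite-loop condition is what makes $MG$ an $E_\infty$-ring and thus legitimate input for \Cref{thm:may}, the lift to $\mathit{SO}$ is what supplies orientability and hence the Thom isomorphism in \emph{all} coefficient fields (not merely mod $2$), and the finite-type hypothesis is what lets us identify Hurewicz images with characteristic numbers. I would also double-check that the odd-dimensional refinement really needs only \Cref{prop:odd-degrees}, i.e., that no auxiliary vanishing at odd primes is secretly required — which is guaranteed since odd-degree elements of a graded-commutative ring square to $2$-torsion.
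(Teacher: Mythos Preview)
Your proposal is correct and matches the paper's own proof essentially verbatim: the paper simply states that the proposition follows from ``the previous discussion'' (the setup on Thom spectra, orientability, and characteristic numbers) together with \Cref{thm:may} and \Cref{prop:odd-degrees}. You have unpacked that discussion accurately and applied the two cited results exactly as intended.
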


	For the sake of completeness, we note that the unoriented case is simpler. 
	\begin{prop}\label{prop:unoriented}
		Suppose that $f\colon G\rightarrow GL_1 S$ is a map of based homotopy commutative $H$-spaces and non-trivial on $\pi_0$. Then $MG$ is a homotopy commutative ring spectrum such that the mod-2 Hurewicz homomorphism \[\pi_* MG\rightarrow H_*(MG;\bZ/2)\] is a split injection. In particular an element in $\pi_*MG$ is nilpotent if and only if its mod-2 Hurewicz image is nilpotent.
	\end{prop}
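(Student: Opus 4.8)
The plan is to leverage the multiplicativity of the Thom spectrum functor together with the fact that, in the unoriented case, $MG$ is detected by mod-$2$ homology.

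First I would record the ring structure and its immediate consequences. Since $f$ is a map of homotopy commutative $H$-spaces, the Thom spectrum functor carries it to a homotopy commutative ring spectrum $MG$ with unit $S\to MG$ \cite[Ch.~IX]{LMS86}. Because $f$ is non-trivial on $\pi_0$, the spectrum $MG$ is not $\bZ$-orientable and $\pi_0 MG\cong\bZ/2$ \cite[Ch.~IX, Prop.~4.5]{May77}. As $MG$ is a ring spectrum this forces $2=2\cdot 1=0$ in $\pi_*MG$, so that both $\pi_*MG$ and $H_*(MG;\bZ/2)$ are $\bZ/2$-vector spaces and the mod-$2$ Hurewicz map $h$ is $\bZ/2$-linear. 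Over a field every injection of vector spaces is split, so it suffices to prove that $h$ is injective.

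To prove injectivity I would exhibit a compatible $H\bZ/2$-module structure on $MG$. Every stable spherical fibration is mod-$2$ orientable, so $MG$ carries a mod-$2$ Thom class; the associated Thom isomorphism identifies $H_*(MG;\bZ/2)\cong H_*(BG;\bZ/2)$, under which $h$ records the mod-$2$ characteristic numbers. The point is that for the relevant $G$ the cohomology $H^*(MG;\bZ/2)$ is free over the Steenrod algebra, so that $MG$ splits as a wedge of suspensions of $H\bZ/2$ and hence is an $H\bZ/2$-module. The module action $H\bZ/2\wedge MG\to MG$ then retracts the unit map $MG\to H\bZ/2\wedge MG$, and applying $\pi_*$ shows that $h$ is a (split) injection; equivalently, the mod-$2$ characteristic numbers detect $\pi_*MG$.

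The hard part will be precisely this detection statement, i.e.\ establishing that $MG$ is a generalized mod-$2$ Eilenberg--MacLane spectrum (equivalently that $h$ is injective). This is the analogue for $MG$ of Thom's computation of $MO$ and is where the hypotheses on $G$ are genuinely used; everything else is formal. Granting it, the final assertion follows with no extra work: $h$ is a ring homomorphism, so $h(x^n)=h(x)^n$ for every $x\in\pi_*MG$, and injectivity yields $x^n=0$ if and only if $h(x)^n=0$. Hence $x$ is nilpotent exactly when its mod-$2$ Hurewicz image is nilpotent.
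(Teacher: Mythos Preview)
Your reduction is correct up to the point where you must show that $MG$ is an $H\bZ/2$-module; that is indeed the entire content of the proposition. However, your proposed approach to this step---establishing that $H^*(MG;\bZ/2)$ is free over the Steenrod algebra by a ``Thom-style'' analysis---is not viable at the stated generality. The hypotheses place no constraint on $G$ beyond being a homotopy commutative $H$-space mapping nontrivially on $\pi_0$ into $GL_1 S$; there is no reasonable way to imitate Thom's computation of $MO$ for such an arbitrary $G$, and without bounded-below and finite-type assumptions the implication from freeness over $\mathcal{A}$ to a wedge of Eilenberg--MacLane spectra is already problematic. So the ``hard part'' you flag is not merely unfinished but is aimed in the wrong direction.

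The paper fills this gap with a general structural theorem of W\"urgler \cite{W86}: any \emph{homotopy commutative} ring spectrum with $2=0$ in $\pi_0$ is an $H\bZ/2$-module. Since you have already checked that $MG$ is homotopy commutative with $\pi_0 MG\cong\bZ/2$, this applies immediately and yields the splitting of the Hurewicz map in one stroke. The moral is that the hypotheses on $G$ are used only to force $2=0$; after that, W\"urgler's theorem does all the work and no analysis of the Steenrod algebra action on $H^*(MG;\bZ/2)$ is required.
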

	\begin{proof}
		It follows from the results of \cite[Ch.~IX]{LMS86} that $MG$ is a homotopy commutative ring spectrum and by the discussion above $\pi_0 MG=\bZ/2$. So by \cite[Thm.~1.1]{W86}, $MG$ is an $H\bZ/2$-module and the Hurewicz homomorphism splits.
	\end{proof}

	To understand how these results fit in with classical bordism ring calculations, we offer the following two results:
	\begin{thm}\label{thm:vanishing}\ 
	\begin{enumerate}
	\item\label{it:vanishing-real}
	For a {\it String}-- (resp.~{\it Spin}--)manifold $M$, the following are equivalent:
	\begin{enumerate}
	\item  For all sufficiently large $n$, $M^n$ bounds a {\it String}-- (resp.~\mbox{{\it Spin}--)}manifold.
	\item\label{it:oriented} $M$ bounds an oriented manifold.
	\item\label{it:sw-pontryagin} All Stiefel--Whitney and Pontryagin numbers of $M$ vanish.
	\end{enumerate}
	\item\label{it:vanishing-complex}
	For a $U\langle 6 \rangle$-- (resp.~$\mathit{SU}$--)manifold $M$, the following are equivalent:
	\begin{enumerate}
	\item For all sufficiently large $n$, $M^n$ bounds a $U\langle 6\rangle$-- (resp.~\mbox{$\mathit{SU}$--)}manifold.
	\item \label{it:complex} $M$ bounds a stably complex manifold.
	\item \label{it:chern} All Chern numbers of $M$ vanish.
	\end{enumerate}
	\end{enumerate}
	
	%	Suppose $M$ is a {\it String}-manifold (resp.~{\it Spin}-manifold) then for all sufficiently large $n$, $M^n$ bounds a {\it String}-manifold (resp.~{\it Spin}-manifold) if and only if $M$ bounds an oriented manifold. This occurs if and only if the Stiefel--Whitney and Pontryagin numbers of $M$ vanish. 

		%Suppose $M$ is a $U\langle 6\rangle$-manifold (resp.~an $SU$-manifold) then for all sufficiently large $n$, $M^n$ bounds a $U\langle 6\rangle $-manifold (resp.~an $SU$-manifold) if and only if $M$ bounds a stably complex manifold. This occurs if and only if the Chern numbers of $M$ vanish. 
	\end{thm}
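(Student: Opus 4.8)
The three conditions in each part fit into a cycle of implications, and the plan is to close it using \Cref{prop:bordism} and \Cref{thm:may} together with the classical structure of the oriented and complex bordism rings. Write $MG$ for the Thom spectrum in question and recall that the first condition in each part---that $M^n$ bounds a $G$-manifold for all large $n$---is exactly the assertion that the bordism class $[M]\in\pi_* MG$ is nilpotent. The second condition (\Cref{it:oriented}, resp.~\Cref{it:complex}) says that the image of $[M]$ under the forgetful map $\pi_* MG\to M\mathit{SO}_*$ (resp.~$\pi_* MG\to MU_*$) vanishes, and the third (\Cref{it:sw-pontryagin}, resp.~\Cref{it:chern}) says, via the Thom isomorphism, that the Hurewicz image of $[M]$ vanishes in $H_*(M\mathit{SO};\bZ/2)$ and $H_*(M\mathit{SO};\bQ)$ (resp.~in $H_*(MU;\bZ)$). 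The equivalence of the second and third conditions is purely classical: for the oriented case it is Wall's theorem that an oriented manifold is null-bordant precisely when all its Stiefel--Whitney and Pontryagin numbers vanish, and for the complex case it is the theorem of Milnor and Novikov that $MU_*$ is torsion free with Hurewicz-injective image in $H_*(MU;\bZ)$.

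For the implication from the first condition to the third, suppose $[M]$ is nilpotent in $\pi_* MG$. Then its image in $M\mathit{SO}_*$ (resp.~$MU_*$) is nilpotent, hence so is its Hurewicz image in $H_*(M\mathit{SO};\bQ)$, $H_*(M\mathit{SO};\bZ/2)$ (resp.~$H_*(MU;\bZ)$). Under the ring structure for which the Hurewicz map is multiplicative, the Thom isomorphism identifies each of these with the homology ring $H_*(B\mathit{SO};-)$ (resp.~$H_*(BU;\bZ)$) under the Pontryagin product, which is a polynomial algebra and in particular reduced. A nilpotent element of a reduced ring is zero, so the relevant characteristic numbers vanish, giving the third condition.

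The content of the theorem is the implication from the third condition back to the first. By \Cref{thm:may} it suffices to show that the Hurewicz image of $[M]$ in $H_*(MG;\bQ)$ and in $H_*(MG;\bZ/p)$ is nilpotent for every prime $p$. Naturality of the Thom isomorphism along the ring map $MG\to M\mathit{SO}$ (resp.~$MU$) sends this image to the corresponding image downstairs, which vanishes by the third condition: for $\bQ$ and $\bZ/2$ immediately, and for $\bZ/p$ with $p$ odd because $H_*(B\mathit{SO};\bZ/p)$ is polynomial on the mod-$p$ Pontryagin classes, so the vanishing of the integral Pontryagin numbers forces the mod-$p$ pairings, and hence the image, to vanish. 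Consequently the Hurewicz image of $[M]$ lies in the kernel of the comparison map $H_*(BG;k)\to H_*(B\mathit{SO};k)$ (resp.~$\to H_*(BU;k)$), and everything comes down to showing that this kernel is a nil ideal.

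I expect this last point to be the main obstacle, and it is what forces the case-by-case formulation. When $G=\mathit{SU}$ the map $H^*(BU;k)\to H^*(B\mathit{SU};k)$ is surjective (it merely kills $c_1$), so dually $H_*(B\mathit{SU};k)\hookrightarrow H_*(BU;k)$ is injective and the kernel is zero; likewise $B\mathit{Spin}\to B\mathit{SO}$ is a rational equivalence and a mod-$p$ equivalence for odd $p$. In the remaining situations---notably $G=\mathit{String}$ and $G=U\langle 6\rangle$, and the mod-$2$ behaviour of $\mathit{Spin}$---the comparison map is far from injective: the fibre of $BG\to B\mathit{SO}$ (resp.~$\to BU$) is a product of Eilenberg--MacLane spaces, whose mod-$p$ cohomology contributes extra polynomial and exterior generators to $H^*(BG;\bZ/p)$. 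The homology classes dual to these generators assemble into divided-power and exterior algebras, whose augmentation ideals over a field are nil; hence the kernel of the comparison map is a nil ideal and the image of $[M]$ is nilpotent. It is exactly here that \Cref{thm:may} is indispensable, since it requires only a nilpotent---rather than vanishing---Hurewicz image, and these extra characteristic numbers need not vanish identically.
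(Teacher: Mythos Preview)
Your cycle of implications and the handling of $(b)\Leftrightarrow(c)$ and $(a)\Rightarrow(c)$ are fine and match the paper. The divergence is entirely in $(c)\Rightarrow(a)$. You aim to show that the kernel of $H_*(BG;k)\to H_*(B\mathit{SO};k)$ (resp.\ $\to H_*(BU;k)$) is a nil ideal for every field $k$ and then invoke \Cref{thm:may}. The paper takes a sharper route: it proves these comparison maps are \emph{injective} already on integral homology, so condition $(c)$ forces the integral Hurewicz image of $[M]$ in $H_*(MG;\bZ)$ to vanish outright, and then \Cref{thm:may-original} (not \Cref{thm:may}) gives nilpotence. Injectivity is assembled from Stong's analysis of the Serre spectral sequences for the mod-$2$ homology maps, an easy rational argument, and a Bockstein argument showing that $H_*(MG;\bZ)$ has only simple $2$-torsion (the absence of odd torsion uses that $BO\langle k\rangle$ retracts off $BU\langle k\rangle$ after inverting $2$ together with the torsion-freeness of $H_*(BU\langle k\rangle;\bZ)$ for $k\le 6$).

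Your own argument for the nil-kernel claim has a genuine gap. Asserting that the fibre of $BG\to B\mathit{SO}$ is ``a product of Eilenberg--MacLane spaces'' (for $B\mathit{String}\to B\mathit{SO}$ it is a two-stage Postnikov tower, not a product) and that the dual classes ``assemble into divided-power and exterior algebras'' does not by itself identify the kernel of the ring map $H_*(BG;k)\to H_*(B\mathit{SO};k)$ as nil: you would need a multiplicative collapse statement for the Serre spectral sequence, and even then the ring structure on $H_*(BG;k)$ is a priori only that of the associated graded, so multiplicative extensions must be ruled out. In fact the input you need here is essentially Stong's theorem, which already gives injectivity. Consequently your closing remark that \Cref{thm:may} is ``indispensable'' because the extra $G$-characteristic numbers ``need not vanish identically'' is incorrect: the injectivity shows they do vanish, and the vanishing hypothesis of \Cref{thm:may-original} suffices.
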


	\begin{thm}\label{thm:converse}
		Suppose that $R$ is one of the following Thom spectra: $MO$, $M\mathit{SO}$, $M\mathit{Spin}$, $M\mathit{String}$, $MU$, $M\mathit{SU}$, or $MU\langle 6\rangle$. Then the kernel of the integral Hurewicz homomorphism $\pi_* R\rightarrow H_*(R;\bZ)$ is precisely the ideal of nilpotent elements, i.e., the converse of \Cref{thm:may-original} also holds
		for these $R$. 
	\end{thm}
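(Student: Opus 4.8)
The forward inclusion---that elements in the kernel of the integral Hurewicz homomorphism are nilpotent---is exactly \Cref{thm:may-original}, so the entire content lies in the reverse inclusion: every nilpotent $x\in\pi_* R$ must have trivial image in $H_*(R;\bZ)$. Since each of these $R$ is a homotopy commutative ring spectrum, the Hurewicz homomorphism $h\colon\pi_* R\to H_*(R;\bZ)$ is a ring map, so it carries a nilpotent $x$ to a nilpotent class $h(x)$, and the task reduces to proving $h(x)=0$. The plan is to detect this vanishing through field coefficients and then upgrade to an integral statement, organizing the spectra by how much torsion obstructs that upgrade.

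For several of the listed $R$ the conclusion is immediate. For $R=MO$ and $R=MU$ the ring $\pi_* R$ is polynomial (Thom, Milnor), hence reduced, so the nilpotent ideal already vanishes. For $R=M\mathit{SO}$ one combines rational injectivity (detection by Pontryagin numbers) with the mod-$2$ Hurewicz map into the polynomial ring $H_*(B\mathit{SO};\bZ/2)$ and Wall's theorem that Stiefel--Whitney and Pontryagin numbers detect oriented bordism, to see that $\pi_* M\mathit{SO}$ is reduced as well. For $R=M\mathit{SU}$ the integral homology $H_*(M\mathit{SU};\bZ)$ is torsion free and embeds as a subring of $H_*(MU;\bZ)\cong\bZ[b_1,b_2,\dots]$, hence is reduced, so $h(x)=0$ follows directly from nilpotence of $h(x)$.

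The substantive cases are $M\mathit{Spin}$, $M\mathit{String}$, and $MU\langle 6\rangle$, whose nilpotent ideals are genuinely nonzero (they contain torsion classes detected only by $K$-theoretic invariants, consistent with \Cref{thm:vanishing}). Here I would argue as follows. The class $h(x)$ is nilpotent, hence vanishes in the polynomial ring $H_*(R;\bQ)$, so $h(x)$ is torsion. For each prime $p$ the mod-$p$ Hurewicz image $h_p(x)$ is nilpotent in the (polynomial, hence reduced) ring $H_*(R;\bZ/p)$ and therefore vanishes; by the coefficient long exact sequence of $\bZ\xrightarrow{p}\bZ\to\bZ/p$ this forces $h(x)\in p\,H_*(R;\bZ)$. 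Thus $h(x)$ is a torsion class lying in $\bigcap_p p\,H_*(R;\bZ)$, and an elementary computation shows such a class is zero precisely when the torsion subgroup of $H_*(R;\bZ)$ has square-free exponent in the relevant degree. For $M\mathit{Spin}$ this is clean: its homotopy carries only $2$-torsion (Anderson--Brown--Peterson), so the odd-primary reductions are automatic, and the $2$-torsion of $H_*(B\mathit{Spin};\bZ)$ is of order two.

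The main obstacle is exactly this last step for $M\mathit{String}$ and $MU\langle 6\rangle$. Unlike $M\mathit{Spin}$, these carry odd torsion in homotopy, and their integral homology can carry torsion of higher order arising from the Eilenberg--MacLane factors of $BG$ at odd primes; a class of higher torsion can survive all field-coefficient reductions, so the soft argument above does not suffice on its own. The crux is therefore to show that the \emph{image} of $h$ meets only the prime-order torsion---equivalently, that a spherical (primitive) class with vanishing rational and mod-$p$ reductions is zero. I would isolate this as a separate lemma and establish it using the explicit structure theory of these Thom spectra (e.g.\ Stong's computations of the relevant characteristic numbers) rather than formal arguments; the reduction above shows this is precisely what remains once the torsion-free and reduced cases have been dispatched.
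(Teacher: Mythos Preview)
Your overall strategy---pass to field coefficients and control the torsion in $H_*(R;\bZ)$---is the same one the paper uses, but you stop short at exactly the point where the paper finishes, and you do so because of a mistaken belief about the torsion.

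The paper's argument is simply that $H_*(R;\bZ)$ is \emph{reduced} for every $R$ on the list, so any nilpotent Hurewicz image is zero. This is proved by exhibiting $H_*(R;\bZ)$ as a subring of a reduced ring: for the complex cases $MU$, $M\mathit{SU}$, $MU\langle 6\rangle$ one has $H_*(R;\bZ)\hookrightarrow H_*(R;\bQ)$ because the integral homology is torsion-free; for the real cases $M\mathit{SO}$, $M\mathit{Spin}$, $M\mathit{String}$ one has $H_*(R;\bZ)\hookrightarrow H_*(R;\bZ/2)\times H_*(R;\bQ)$ because the only torsion is simple $2$-torsion. The torsion statements are the substance: there is \emph{no} odd torsion in $H_*(B\mathit{String};\bZ)$ (since $H^*(B\mathit{String};\bZ/p)$ is concentrated in even degrees for odd $p$, so the Bockstein spectral sequence collapses), and $H_*(BU\langle 6\rangle;\bZ)$ is torsion-free for the same reason at every prime. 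The simple $2$-torsion claim in the real cases follows from Stong's injection $H_*(B\mathit{String};\bZ/2)\hookrightarrow H_*(B\mathit{SO};\bZ/2)$ together with the known Bockstein structure on the target.

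So the obstacle you flag---higher-order odd torsion in $H_*(M\mathit{String};\bZ)$ or $H_*(MU\langle 6\rangle;\bZ)$---does not exist, and your proposed workaround (restricting attention to the spherical image of $h$ and invoking further structure theory) is unnecessary. Two further remarks: your assertion that $H_*(R;\bZ/p)$ is ``polynomial, hence reduced'' is not justified for $M\mathit{Spin}$ or $M\mathit{String}$; what is true (and what the paper uses) is that at $p=2$ these inject into the polynomial ring $H_*(M\mathit{SO};\bZ/2)$, while at odd primes one does not need reducedness of $H_*(R;\bZ/p)$ at all once one knows there is no odd torsion integrally. In short, your reduction was correct, but you should have pushed through the torsion computation rather than declaring it an obstacle.
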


	In general the converse of \Cref{thm:may-original} holds when the integral homology ring is reduced, so every nilpotent element in the homotopy ring is in the kernel of the Hurewicz map. For example, the case of $MO$ in \cref{thm:converse} follows from \Cref{prop:unoriented} and the identification of $H_*(MO;\bZ/2)\cong H_*(BO;\bZ/2)$ as a polynomial algebra.  For the remainder of the claims in \Cref{thm:converse} we will show (see \Cref{fig:real,fig:complex}) that the relevant integral homology ring is a subring of a reduced ring. 

	In the remainder of this subsection, we will simultaneously prove \cref{thm:vanishing} and \cref{thm:converse} by analyzing the Hurewicz homomorphisms. First we note that the equivalences \eqref{it:oriented} $\iff$ \eqref{it:sw-pontryagin} and \eqref{it:complex} $\iff$ \eqref{it:chern} are classical:
	\begin{prop}\label{prop:classical}
		Suppose that $M$ is an oriented manifold and $N$ is a stably complex manifold. Then
		\begin{enumerate}
			\item $M$ bounds an oriented manifold if and only if the Stiefel-Whitney and Pontryagin numbers of $M$ vanish \cite[Cor.~1]{Mil60}. 
			\item $N$ bounds a stably complex manifold if and only if the Chern numbers of $N$ vanish \cite[Cor.~20.26]{Swi02}. 
		\end{enumerate}
	\end{prop}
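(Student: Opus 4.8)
The plan is to invoke the two cited structure theorems directly, since both assertions are among the foundational computations of bordism theory; no genuinely new argument is required beyond matching our notion of characteristic number to the classical one. The common mechanism in both cases is that the relevant bordism ring injects into a suitable product of characteristic-number functionals, so that a manifold bounds precisely when all the associated numbers vanish. In each case the forward implication is immediate, since characteristic numbers are bordism invariants, and the content lies entirely in the converse.

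For item (1), I would recall the Milnor--Wall description of the oriented bordism ring $\Omega^{SO}_*\cong \pi_* M\mathit{SO}$: all of its torsion has order two, the free quotient is detected by Pontryagin numbers through the rational Thom isomorphism, and the $2$-torsion is detected by Stiefel--Whitney numbers. Consequently the simultaneous vanishing of all Stiefel--Whitney and Pontryagin numbers kills both the free and the torsion parts of the class $[M]$, forcing it to be null-bordant. This is precisely \cite[Cor.~1]{Mil60}.

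For item (2), I would use that $\Omega^U_*\cong \pi_* MU$ is a torsion-free polynomial ring and that the Hurewicz homomorphism $\pi_* MU\to H_*(MU;\bZ)$ is injective. The Chern numbers of $N$ are exactly the functionals obtained by pairing the Hurewicz image of $[N]$ against the monomials in the Chern classes of the universal bundle; hence all Chern numbers vanish if and only if the Hurewicz image vanishes, which by injectivity happens if and only if $[N]=0$. This is \cite[Cor.~20.26]{Swi02}.

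The only point requiring care---and the closest thing to an obstacle---is bookkeeping rather than mathematics: one must confirm that the ``$k$-characteristic numbers'' in our formulation, defined via the normal bundle and the Thom isomorphism, coincide with the Stiefel--Whitney, Pontryagin, and Chern numbers in the cited sources. This is handled by the standard conversion between normal and tangential characteristic classes already noted above, together with the fact that over a field the Thom isomorphism identifies the Hurewicz image of the fundamental class with exactly these numbers.
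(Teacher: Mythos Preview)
Your proposal is correct and matches the paper's treatment: the paper offers no independent proof of this proposition but simply cites \cite[Cor.~1]{Mil60} and \cite[Cor.~20.26]{Swi02} as classical facts. Your sketch of the underlying structure (Milnor--Wall for the oriented case, injectivity of the Hurewicz map for $MU$) is accurate and merely elaborates what the citations encode.
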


	Now we consider the assertion about nilpotence in part \eqref{it:vanishing-real} of \Cref{thm:vanishing}. For this, we consider the following commutative diagram of graded commutative rings:
	\begin{figure}[H]
	\begin{center}
	\[
		\begin{tikzcd}
			\pi_*M\mathit{String} \rar \dar &  H_*(M\mathit{String};\bZ) \rar[hook] \dar[hook]
			 & H_*(M\mathit{String};\bZ/2)\times H_*(M\mathit{String};\bQ) \dar[hook]\\
			 \pi_*M\mathit{Spin} \rar \dar & H_*(M\mathit{Spin};\bZ) \rar[hook] \dar[hook]& H_*(M\mathit{Spin};\bZ/2)\times H_*(M\mathit{Spin};\bQ) \dar[hook] \\
			 \pi_*M\mathit{SO} \rar[hook] & H_*(M\mathit{SO};\bZ) \rar[hook] & H_*(M\mathit{SO};\bZ/2)\times H_*(M\mathit{SO};\bQ).
		\end{tikzcd}
	\]
	\end{center}
	\caption{\label{fig:real}The Hurewicz homomorphisms for $\pi_*M\mathit{String}$, $\pi_*M\mathit{Spin}$, and $\pi_*M\mathit{SO}$}
	\end{figure}
	In \Cref{fig:real} the vertical maps are induced by forgetting structure, the horizontal maps on the left are the integral Hurewicz homomorphisms, and the horizontal maps on the right are the product of the mod-2 reduction maps and the rationalization maps.  The diagram commutes by the naturality of the Hurewicz homomorphisms. 
	
	\begin{lemma}\label{lem:injective}
		The maps in \Cref{fig:real} labeled with hooked arrows are injective. In particular, since the bottom right term is reduced, so are all of the displayed subrings. As a consequence, every nilpotent element in the displayed bordism groups maps to zero under the integral Hurewicz map.
	\end{lemma}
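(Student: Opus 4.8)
The plan is to reduce everything to the homology of classifying spaces and to exhibit each displayed ring as a subring of the reduced ring in the lower right corner of \Cref{fig:real}. Since $M\mathit{SO}$, $M\mathit{Spin}$ and $M\mathit{String}$ are all $\bZ$-orientable, the multiplicative Thom isomorphism identifies $H_*(MG;k)\cong H_*(BG;k)$ as rings, compatibly with the maps $M\mathit{String}\to M\mathit{Spin}\to M\mathit{SO}$ and $B\mathit{String}\to B\mathit{Spin}\to B\mathit{SO}$. Because $H_*(B\mathit{SO};\bZ/2)$ and $H_*(B\mathit{SO};\bQ)$ are polynomial algebras, the lower right term $H_*(M\mathit{SO};\bZ/2)\times H_*(M\mathit{SO};\bQ)$ is reduced; thus it suffices to prove the hooked arrows injective, after which every displayed ring becomes a subring of this reduced ring and is therefore itself reduced.

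First I would treat the bottom row together with the field-coefficient vertical maps. The kernel of $H_*(M\mathit{SO};\bZ)\to H_*(M\mathit{SO};\bZ/2)\times H_*(M\mathit{SO};\bQ)$ is the intersection of the torsion subgroup with $2H_*$; since the integral (co)homology of $B\mathit{SO}$ has only $2$-torsion, all of order $2$ (a classical computation), this intersection vanishes and the bottom middle arrow is injective. Composing with it, the bottom left Hurewicz map becomes the product of the Stiefel--Whitney and Pontryagin number homomorphisms, which is injective by Wall's determination of the oriented bordism ring; hence the bottom left arrow is injective. For the rightmost vertical maps I would dualize: the maps $H^*(B\mathit{SO};k)\to H^*(B\mathit{Spin};k)\to H^*(B\mathit{String};k)$ are surjective for $k=\bZ/2$ and $k=\bQ$ --- rationally because $B\mathit{Spin}\to B\mathit{SO}$ is an equivalence and $p_1$ is killed on passing to $B\mathit{String}$, and mod $2$ by the stable quotient descriptions of $H^*(B\mathit{Spin};\bZ/2)$ and $H^*(B\mathit{String};\bZ/2)$ --- so the induced maps on $k$-homology are injective. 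Consequently the middle and top right products embed in the reduced lower right product and are reduced.

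It then remains to handle the integral vertical maps and the remaining horizontal maps, and the core claim is that
\[
H_*(M\mathit{String};\bZ)\longrightarrow H_*(M\mathit{Spin};\bZ)\longrightarrow H_*(M\mathit{SO};\bZ)
\]
are injective. Once this is known, $H_*(M\mathit{Spin};\bZ)$ and $H_*(M\mathit{String};\bZ)$ become subgroups of $H_*(M\mathit{SO};\bZ)$, so they too carry only $2$-torsion of order $2$, whence the top and middle horizontal maps $H_*(MG;\bZ)\to H_*(MG;\bZ/2)\times H_*(MG;\bQ)$ have trivial kernel by the same argument as in the bottom row. Via the Thom isomorphism these verticals are induced by $B\mathit{String}\to B\mathit{Spin}\to B\mathit{SO}$, and I would verify injectivity one prime at a time: rationally and mod $2$ they are dual to the cohomology surjections above, while at an odd prime $p$ one has $B\mathit{Spin}_{(p)}\simeq B\mathit{SO}_{(p)}$ with torsion-free homology, and a Serre (or Eilenberg--Moore) spectral sequence for the fibration $K(\bZ,3)\to B\mathit{String}\to B\mathit{Spin}$, exploiting that the transgressing class $p_1$ is a non-zero-divisor, should identify $H^*(B\mathit{String};\bZ_{(p)})$ with $\bZ_{(p)}[p_2,p_3,\dots]$ and force the map to be injective.

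The main obstacle is precisely this last, odd-primary point: a priori the $K(\bZ,3)$-factor in the Whitehead tower could contribute odd torsion to $H_*(B\mathit{String};\bZ)$, which would destroy injectivity into $H_*(M\mathit{String};\bZ/2)\times H_*(M\mathit{String};\bQ)$, so controlling it requires the regularity of $p_1$ (to collapse the relevant $\mathrm{Tor}$-groups) together with the mod-$2$ surjectivity for $B\mathit{String}$. Granting this, all the hooked arrows are injective, every displayed integral homology ring embeds in the reduced ring $H_*(M\mathit{SO};\bZ/2)\times H_*(M\mathit{SO};\bQ)$ and is therefore reduced, and so any nilpotent class in $\pi_*M\mathit{SO}$, $\pi_*M\mathit{Spin}$ or $\pi_*M\mathit{String}$ maps to a nilpotent, hence to zero, under the integral Hurewicz homomorphism.
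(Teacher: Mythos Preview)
Your overall architecture differs from the paper's: you try to prove the integral vertical maps $H_*(M\mathit{String};\bZ)\to H_*(M\mathit{Spin};\bZ)\to H_*(M\mathit{SO};\bZ)$ injective first and then read off the horizontal injectivity from the torsion structure of $H_*(M\mathit{SO};\bZ)$, whereas the paper does the reverse---it establishes the right horizontal maps first (by controlling the torsion in each $H_*(MG;\bZ)$ separately) and only then deduces the middle verticals from commutativity of the squares.

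There is a genuine gap at the odd-primary step, which you yourself flag. Your proposed Serre (or Eilenberg--Moore) argument for $K(\bZ,3)\to B\mathit{String}\to B\mathit{Spin}$ relying on the regularity of $p_1$ is incomplete: for every odd prime $p$ the fiber $K(\bZ,3)$ has nontrivial $p$-torsion in its (co)homology (classes $\beta^{\varepsilon}P^{I}\iota_3$), and the single differential $d_4(\iota_3)=p_1$ does not dispose of these. One must also control the transgressions of all such classes, which land on Steenrod operations applied to $p_1$ in $H^*(B\mathit{Spin};\bZ/p)$; verifying that the resulting differentials clear everything out is a real computation, not a formal consequence of ``$p_1$ is a non-zero-divisor.'' The paper sidesteps this entirely: it cites Hovey--Ravenel for the fact that $H^*(B\mathit{String};\bZ/p)$ is concentrated in even degrees for $p$ odd, so the mod-$p$ Bockstein spectral sequence collapses and there is no odd torsion; and for $B\mathit{SO}$ and $B\mathit{Spin}$ it uses instead that the composite $KO\to KU\to KO$ is an equivalence after inverting $2$, so $H_*(BO\langle k\rangle;\bZ[1/2])$ is a retract of the torsion-free $H_*(BU\langle k\rangle;\bZ[1/2])$.

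There is a second, smaller circularity at the prime $2$. Knowing the vertical maps are injective rationally and mod $2$ does not by itself give injectivity with $\bZ_{(2)}$-coefficients (think of $\bZ/4\to\bZ/2$); one needs the simple $2$-torsion statement for $H_*(M\mathit{String};\bZ)$ as \emph{input}, but in your scheme it is the \emph{output}. The paper breaks this loop by proving simple $2$-torsion for $M\mathit{Spin}$ and $M\mathit{String}$ directly from Stong's mod-$2$ injection into $H_*(M\mathit{SO};\bZ/2)$ together with the naturality and convergence of the Bockstein spectral sequence, using that the $E_2$-page for $M\mathit{SO}$ is already concentrated in even degrees. Your treatment of the bottom row and of the right-hand (field-coefficient) verticals is essentially the same as the paper's.
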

	
	This lemma and \Cref{thm:may-original} immediately imply that the kernels of the integral Hurewicz maps in \Cref{fig:real} are precisely the nilpotent elements. It also immediately follows that these classes are precisely the elements which map to zero in $\pi_*M\mathit{SO}$.
	
	\begin{proof} (of \cref{lem:injective})
		Since all of these Thom spectra are orientable, the homology of these Thom spectra are isomorphic as rings to the homology of their corresponding classifying spaces. All of these spaces are of finite type, so their homology is dual to their cohomology when working with field coefficients. We will use this fact repeatedly below.

		We begin with the column on the right. The induced maps in mod-2 homology are injections by Stong's analysis of the associated Serre spectral sequences \cite{Sto63}. An easy argument with the Serre spectral sequence shows that the maps in rational homology are injections as well. 
		% To see the mod-2 homology maps are injections, one shows the maps in cohomology are surjections. Examination of the Serre spectral sequence, in particular the transgressions, for $K(\pi_{k},k-1)\rightarrow BO\langle k +1\rangle \rightarrow BO\langle k\rangle$ shows that $H^*BO\langle k\rangle \rightarrow H^*(BO\langle k+1\rangle)$ is a surjection when the $k$-invariant $H^*(K(\pi_k, k))\rightarrow H^*(BO\langle k \rangle)$ is injective. Stong's description of $H^*(BO\langle k\rangle)$ is in terms of something that is obviously a quotient of $H^*(BO\langle k-1\rangle)$ tensored with the image of the $k$-invariant. The kernel of the $k$-invariant is given by an ideal generated by a single Steenrod operation (depending on $k \mod 8$). When $k<8$ this Steenrod operation is trivial for instability reasons. 

		Since all of our Thom spectra are of finite type, the injectivity of the horizontal homomorphisms on the right is equivalent to the claim that the only torsion in the integral homology groups is simple 2-torsion. We first show that all 2-torsion is simple which is equivalent to the claim that the mod-2 Bockstein spectral sequence collapses at $E_2$. In the case of $M\mathit{SO}$, the calculation of the Bockstein action in \cite[p.~513]{Swi02} shows that the $E_2$-page of the Bockstein spectral sequence is concentrated in even degrees and hence collapses. For $M\mathit{Spin}$ and $M\mathit{String}$, this follows from the given injections into $H_*(M\mathit{SO};\bZ/2)$ and the naturality and convergence of the Bockstein spectral sequences. Altogether this implies that the maps in homology with $\mathbb{Z}_{(2)}$-coefficients are injective.
		%The fact that 2-torsion is simple follows from the injectivity of the mod-2-homology homomorphisms and that the mod-2 homology Bockstein spectral sequence for $BSO$ collapses at $E_2$. 

		To see that the there is no odd primary torsion, we recall that the composite map of spectra \[ KO \rightarrow KU \rightarrow KO \] corresponding to complexifying real vector bundles and then forgetting the complex structure is an equivalence after inverting two. It follows that there is a similar retraction between the zeroth spaces of their higher connective covers. This implies that $H_*(BO\langle k\rangle;\bZ[1/2])$ is a retract of $H_*(BU\langle k \rangle;\bZ[1/2])$. It is well known that latter groups are torsion free if $k\leq 4$, in which case $BU\langle k\rangle$ is either $BU$ or $B\mathit{SU}$. By \cite[Middle of p.~3485]{HoR95}, $H^*(B\mathit{String};\bZ/p)$ is concentrated in even degrees for $p$ odd and $H^*(BU\langle 6\rangle;\bZ/p)$  is concentrated in even degrees for all primes $p$. By applying the Bockstein spectral sequence again we see that $H_*(B\mathit{String};\bZ[1/2])$ and $H_*(BU\langle 6\rangle; \bZ)$ are torsion free. 

		The vertical homomorphisms in the middle are now injective due to the commutativity of the diagram. The bottom composite horizontal homomorphism can be calculated by calculating the Stiefel--Whitney and Pontryagin numbers of the manifold. This map is an injection by the first half of \Cref{prop:classical}.
		% More specifically the Pontryagin numbers appear as coefficients in the image of $\pi_* M\mathit{SO}\rightarrow H_*(M\mathit{SO};\bZ) \rightarrow H_*(MU;\bZ).$ After inverting 2 the last map is a split injection so we can calculate the coefficients via these numbers.
\end{proof}

This completes the proof of the first half of \cref{thm:vanishing} and \cref{thm:converse}. To conclude this subsection, we address the complex analogs. The relevant diagram of homology rings in this case is:
	\begin{figure}[H]
	\begin{center}
	\[
		\begin{tikzcd}
			\pi_*MU\langle 6\rangle  \rar \dar &  H_*(MU\langle 6\rangle;\bZ) \rar[hook] \dar[hook]
			 & H_*(MU\langle 6\rangle;\bQ) \dar[hook]\\
			 \pi_*M\mathit{SU} \rar \dar & H_*(M\mathit{SU};\bZ) \rar[hook] \dar[hook]& H_*(M\mathit{SU};\bQ) \dar[hook] \\
			 \pi_*MU \rar[hook] & H_*(MU;\bZ) \rar[hook] & H_*(MU;\bQ).
		\end{tikzcd}
		\]
	\end{center}
	\caption{\label{fig:complex}The Hurewicz homomorphisms for $\pi_*M\mathit{U}\langle 6\rangle$, $\pi_*M\mathit{SU}$, and $\pi_*M\mathit{U}$}
	\end{figure}
	
	The bottom left map is injective by the second half of \Cref{prop:classical}. The maps in rational homology are injections by an easy argument with the Serre spectral sequence.  The injectivity of the horizontal arrows on the right is addressed during the proof of \cref{lem:injective}. Now one completes the proof of \Cref{thm:vanishing} using \cref{thm:may-original} as before.
	
\subsection{Differentials in the Adams spectral sequence and non-existence of $E_\infty$-structures}

We can use our main result to establish differentials in the Adams spectral sequence, as follows: 

	\begin{prop}\label{prop:ass}
	      Suppose $R$ is a bounded below $E_\infty$-ring spectrum such that $H_*(R;\bZ/p)$ is of finite type. Let $x$ be an element in positive filtration in the $H\bZ/p$-based Adams spectral sequence converging to the homotopy of the $p$-completion $R_p$ of $R$. Then
	      either 
	      \begin{enumerate}
	     	 \item $x$ does not survive the spectral sequence,
		 \item $x$ detects a non-trivial element in $\pi_* R_p\otimes \bQ$, or 
		 \item $x$ detects a nilpotent element in $\pi_*R_p$ and
		 as a consequence all sufficiently large powers of $x$
		 do not survive the spectral sequence.
	      \end{enumerate}
	\end{prop}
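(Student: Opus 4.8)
The plan is to apply \Cref{thm:may} to the $E_\infty$-ring $R_p$, exploiting the fact that for a $p$-complete ring the only homology theories in which \Cref{thm:may} can detect non-nilpotence are $H\bZ/p$ and $H\bQ$. First I would dispose of case (1): if $x$ is not a permanent cycle, or is a boundary, then it does not survive and there is nothing to prove. So assume $x$ survives to $E_\infty$. Since the $H\bZ/p$-based Adams spectral sequence of a bounded-below, mod-$p$ finite type spectrum converges to $\pi_* R_p$, the class $x$ then detects a well-defined element $\bar x\in\pi_* R_p$. Note also that $p$-completion preserves $E_\infty$-structures, so $R_p$ is again an $E_\infty$-ring and is admissible input for \Cref{thm:may}.

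Next I would turn the remaining trichotomy into a dichotomy. If the image of $\bar x$ under the rationalization $\pi_* R_p\to\pi_* R_p\otimes\bQ$ is non-zero we are in case (2); so assume it vanishes, i.e.\ $\bar x$ is torsion, and aim for case (3). The point is that the hypotheses of \Cref{thm:may} are then satisfied for $\bar x$ with image identically zero. Indeed: (i) for $k=\bZ/p$, the mod-$p$ Hurewicz map is the edge homomorphism of the Adams spectral sequence (projection onto filtration zero), so every class of positive filtration, and $\bar x$ in particular, has zero image; (ii) for $k=\bZ/q$ with $q\neq p$, the group $\pi_* R_p$ is a $\bZ_p$-module, so $q$ acts invertibly, $R_p\wedge S/q\simeq 0$, and hence $H_*(R_p;\bZ/q)=0$; and (iii) for $k=\bQ$, the equivalence $S_\bQ\simeq H\bQ$ identifies the rational Hurewicz map with rationalization, under which $\bar x$ maps to zero by assumption. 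Thus \Cref{thm:may} applies and $\bar x$ is nilpotent, which is exactly case (3).

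It then remains to deduce the statement about powers. If $\bar x^N=0$ in $\pi_* R_p$, then, since the Adams spectral sequence is multiplicative and $x$ is a permanent cycle detecting $\bar x$, the power $x^n$ is a permanent cycle representing the image of $\bar x^n$ in the associated graded $E_\infty$ of the filtration; for $n\geq N$ we have $\bar x^n=0$, so $x^n=0$ on $E_\infty$. In other words, all sufficiently large powers of $x$ fail to survive the spectral sequence.

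The essential content is entirely contained in \Cref{thm:may}; the work here is the bookkeeping that collapses its general homological hypothesis to the single rational condition separating cases (2) and (3). Concretely, the steps requiring care are the vanishing $H_*(R_p;\bZ/q)=0$ for $q\neq p$ and the identification of the mod-$p$ edge homomorphism with the Hurewicz map, together with the final multiplicativity argument. I expect the last of these---pinning down the precise sense in which nilpotence of $\bar x$ forces the permanent cycles $x^n$ to vanish on $E_\infty$---to be the main point needing a careful statement, though it is standard once phrased via the associated graded.
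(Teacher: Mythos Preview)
Your proof is correct and follows essentially the same route as the paper: assume $x$ survives and detects a torsion class, observe that positive Adams filtration forces trivial mod-$p$ Hurewicz image, invoke \Cref{thm:may} to conclude nilpotence, and then use multiplicativity to see that large powers of $x$ must die. You are in fact slightly more careful than the paper in explicitly verifying the hypotheses of \Cref{thm:may} at the primes $q\neq p$ and rationally, which the paper leaves implicit.
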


\begin{proof} 
	First we note that the hypotheses on $R$ guarantee the convergence of the Adams spectral sequence and hence any element in $\pi_* R_p$ is detected somewhere in the spectral sequence. If our specified element $x$ fails the first two properties, then it is a permanent cycle and detects a torsion element $z\in\pi_*R_p$. Since $x$ is in positive filtration, $z$ has trivial mod $p$ Hurewicz image and is therefore nilpotent by \Cref{thm:may}. Thus, for every sufficiently large $n$, the element $x^n$ is a permanent cycle detecting $z^n=0$ in homotopy. Such an element can not survive the spectral sequence.
\end{proof}

	\Cref{thm:may} also implies the non-realizability of certain $E_\infty$-ring spectra. To precisely state this result, recall that there are non-nilpotent elements called $v_n\in \pi_{2(p^n-1)}MU$ for each positive $n$ and prime $p$. There are many choices for such elements, but for our purposes we can take any such element detected in positive Adams filtration. 

	\begin{prop}\label{prop:non-realizability}
	  Let $R$ be a bounded below ring spectrum (up to homotopy) under $MU$ such that for some prime $p$ the image of $v_n^k$ in $\pi_*R_p$ is non-nilpotent $p$-torsion for some positive integers $n$ and $k$. Moreover suppose that $H_*(R;\bZ/p)$ is of finite type. Then $R$ does not admit the structure of an $E_\infty$-ring spectrum.
	\end{prop}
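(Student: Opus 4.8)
The plan is to argue by contradiction, reducing the assertion to Theorem~\ref{thm:may} applied to the $p$-completion $R_p$. Suppose $R$ did carry an $E_\infty$-ring structure. Since the $p$-completion of an $E_\infty$-ring is again an $E_\infty$-ring, and since the finite-type hypothesis on $H_*(R;\mathbb{Z}/p)$ ensures that $R_p$ is bounded below of finite type with $H_*(R_p;\mathbb{Z}/p)\cong H_*(R;\mathbb{Z}/p)$, the completion $R_p$ would be a legitimate input for Theorem~\ref{thm:may}. The given homotopy ring map $MU\to R$, followed by the completion $R\to R_p$, is a homotopy ring map $MU\to R_p$ carrying $v_n^k$ to the element $z\in\pi_*R_p$, which by hypothesis is non-nilpotent and $p$-torsion. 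The entire argument then consists in checking that $z$ satisfies the coefficient hypotheses of Theorem~\ref{thm:may}, for this would force $z$ to be nilpotent and produce the desired contradiction.

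I would verify the Hurewicz conditions one coefficient field at a time. Rationally they are automatic: $z$ is torsion, so its image in $H_*(R_p;\mathbb{Q})\cong\pi_*R_p\otimes\mathbb{Q}$ vanishes. For a prime $\ell\neq p$ the mod-$\ell$ homology of $R_p$ vanishes outright, and hence so does the image of $z$: indeed $H_*(R_p;\mathbb{Z}/\ell)$ is annihilated by $\ell$ because the coefficients are $\ell$-torsion, while $\ell$ acts invertibly on it because $\ell$ acts invertibly on $\pi_*R_p$ (a finitely generated $\mathbb{Z}_p$-module, by finite type) and therefore on $R_p\wedge H\mathbb{Z}/\ell$; a group on which $\ell$ is simultaneously zero and invertible is trivial.

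The decisive case is $\ell=p$, where the hypothesis that $v_n$ is chosen in \emph{positive} Adams filtration does the work. The edge homomorphism of the $H\mathbb{Z}/p$-based Adams spectral sequence for $MU$ is exactly the mod-$p$ Hurewicz map, so any class detected in positive filtration maps to zero in $H_*(MU;\mathbb{Z}/p)$; as the Hurewicz map is multiplicative, $v_n^k$ likewise maps to $0$. By naturality of the mod-$p$ Hurewicz homomorphism along the ring map $MU\to R_p$, the image $z$ of $v_n^k$ then has zero Hurewicz image in $H_*(R_p;\mathbb{Z}/p)$, in particular a nilpotent one. With the rational, the $\ell\neq p$, and the $\ell=p$ conditions all in hand, Theorem~\ref{thm:may} applies and shows $z$ nilpotent, contradicting the hypothesis. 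Therefore $R$ admits no $E_\infty$-ring structure.

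I expect the only real friction to be bookkeeping around the passage to $R_p$ rather than any deep input. One must confirm that $p$-completion both preserves the $E_\infty$-structure and keeps the spectrum bounded below of finite type, so that $R_p$ is admissible for Theorem~\ref{thm:may} and so that its mod-$\ell$ homology genuinely vanishes for $\ell\neq p$; and one must correctly identify positive Adams filtration with the vanishing of the mod-$p$ Hurewicz image and transport this vanishing across $MU\to R_p$. Once these points are settled, the conclusion is a formal application of the main theorem.
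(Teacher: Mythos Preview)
Your proof is correct and follows essentially the same line as the paper's: assume $R_p$ is $E_\infty$, use that $v_n$ has positive mod-$p$ Adams filtration (equivalently, trivial mod-$p$ Hurewicz image) and that this is preserved under the ring map $MU\to R_p$, and derive a contradiction from the main nilpotence theorem. The only packaging difference is that the paper routes the final step through Proposition~\ref{prop:ass} (the Adams spectral sequence reformulation), whereas you appeal to Theorem~\ref{thm:may} directly and therefore must dispatch the rational and $\ell\neq p$ Hurewicz conditions by hand.

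Two minor remarks on that extra bookkeeping. First, for $\ell\neq p$ you do not need to argue that $H_*(R_p;\mathbb{Z}/\ell)$ vanishes: it is enough that $z$ is $p$-torsion and the Hurewicz map is additive into an $\mathbb{F}_\ell$-vector space on which $p$ is invertible, so the image of $z$ is already zero. Second, your parenthetical justification that $\pi_*R_p$ is a finitely generated $\mathbb{Z}_p$-module is stronger than needed and not obviously implied by the stated hypotheses; what you actually use (and what is standard for bounded-below $p$-complete spectra) is only that $\pi_*R_p$ consists of $\mathbb{Z}_p$-modules, so that $\ell$ acts invertibly.
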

	\begin{proof} 
		If $R$ admitted an $E_\infty$-ring structure then so would its $p$-completion $R_p$. So assume $R_p$ is $E_\infty$. Since maps of spectra never lower Adams filtration, the image of $v_n$ in $\pi_* R_p$ must be detected in positive Adams filtration. Since this element is torsion and non-nilpotent, its existence is a contradiction to \Cref{prop:ass}.
	\end{proof}

	\begin{example}
	  The previous result implies that many ring spectra such as $R=MU/(p^i)$, $BP/(p^i v_n^k)$, or $ku/(p^i \beta^k)$, where $\beta$ is the Bott element and $i$ and $k$ are positive integers, do not admit $E_\infty$-ring structures. 
	\end{example}

\subsection{Conceivable refinements of \Cref{thm:may}}

In deducing nilpotence in the homotopy of ring spectra from homological assumptions, there
is an obvious tension between the class of ring spectra to allow and the homology theories used
to test for nilpotence. On one extreme, the nilpotence theorem works for
general ring spectra but needs the more sophisticated homology theory
$MU$ to test against. Somewhat on the other extreme, \Cref{thm:may} applies
only to $H_\infty$-ring spectra, but only needs the most elementary homology theories to test against.

An approximately intermediate result will be derived from \Cref{thm:hopkinsmahowald}, which is an 
unpublished result of Hopkins and Mahowald. Before proving this we will now check that homotopy colimits of connective algebras are connective.

We will use the language of $\infty$-categories \cite{Lur09} and the formalism of
$\infty$-operads \cite[Ch.~2]{Lur12} in this subsection. We note that the homotopy theory of $\infty$-operads has been shown to be Quillen equivalent to that of dendroidal sets \cite{HHM} which is, in turn, Quillen equivalent to the homotopy theory of colored simplicial operads \cite{CiM13}. 

In this context, the symmetric monoidal smash product functor on spectra makes
the associated $\infty$-category $\mathrm{Sp}$ into a symmetric monoidal $\infty$-category such that the smash product commutes with (homotopy) colimits in each variable \cite[Prop.~4.1.3.10]{Lur12}. Using this symmetric monoidal structure we obtain categories $\mathrm{Alg}_{\mathcal{O}}(\mathrm{Sp})$ of $\mathcal{O}$-algebras in $\mathrm{Sp}$ for every $\infty$-operad $\mathcal{O}$ \cite[\S 2.1.3]{Lur12}. For example, if $0\leq k< \infty$, the little $k$-cubes operad defines a topological category of operators whose operadic nerve is the $\infty$-operad  $E_k$ \cite[Defn.~5.1.0.2]{Lur12}. This $\infty$-operad corresponds to a simplicial operad with one color such that the $\infty$-category of $E_k$-ring spectra is the $\infty$-category $\mathrm{Alg}_{E_k}(\mathrm{Sp})$ \cite[Defn.~8.1.0.1]{Lur12}.

Since the smash product of two connective spectra is again connective, the smash product functor restricts to a symmetric monoidal structure on the subcategory $\mathrm{Sp}_{\geq 0}$ of connective spectra \cite[Ex.~2.2.1.3]{Lur12}.  This makes $\mathrm{Alg}_{\mathcal{O}}(\mathrm{Sp}_{\geq 0})$ into a subcategory of $\mathrm{Alg}_{\mathcal{O}}(\mathrm{Sp})$. 
%Now since the subcategory $\mathrm{Sp}_{\geq 0}\subseteq \mathrm{Sp}$ is closed under all colimits, we will see that the subcategory of connective $E_k$-ring spectra is closed under all colimits.

\begin{lemma} \label{lem:connective}
Let $\mathcal{O}$ be an $\infty$-operad with one color and 
let $\mathrm{Alg}_{\mathcal{O}}( \mathrm{Sp})$ be the $\infty$-category of
$\mathcal{O}$-algebras in spectra. Then the subcategory
$\mathrm{Alg}_{\mathcal{O}}(
\mathrm{Sp}_{\geq 0}) \subseteq \mathrm{Alg}_{\mathcal{O}}( \mathrm{Sp})$
spanned by $\mathcal{O}$-algebras in connective spectra
is closed under colimits.
\end{lemma}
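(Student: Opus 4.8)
The plan is to reduce the statement to two special classes of colimits. Recall from \cite{Lur12} (and using that the smash product of spectra preserves colimits separately in each variable) that $\mathrm{Alg}_{\mathcal{O}}(\mathrm{Sp})$ admits all small colimits and that the forgetful functor $U\colon \mathrm{Alg}_{\mathcal{O}}(\mathrm{Sp})\to \mathrm{Sp}$ preserves sifted colimits; recall also that $\mathrm{Sp}_{\geq 0}\hookrightarrow \mathrm{Sp}$ is closed under all small colimits, since $\mathrm{Sp}_{\geq 0}$ is the connective part of the standard $t$-structure. Because every small colimit is generated by finite coproducts and sifted colimits, it suffices to prove that $\mathrm{Alg}_{\mathcal{O}}(\mathrm{Sp}_{\geq 0})$ is closed under each of these two types separately. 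The sifted case is immediate: if $\{A_i\}$ is a sifted diagram of connective $\mathcal{O}$-algebras, then $U(\operatorname{colim}_i A_i)\simeq \operatorname{colim}_i U(A_i)$ is a colimit of connective spectra, hence connective, so $\operatorname{colim}_i A_i$ is a connective algebra.

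The main point is therefore closure under finite coproducts, where one cannot argue on underlying spectra since $U$ does not preserve coproducts. Here I would use the free $\mathcal{O}$-algebra functor $\mathrm{Free}\colon \mathrm{Sp}\to \mathrm{Alg}_{\mathcal{O}}(\mathrm{Sp})$, left adjoint to $U$. Because $\mathcal{O}$ has spaces of operations, $\mathrm{Free}(X)\simeq \coprod_{n\geq 0}\bigl(\mathcal{O}(n)_+\wedge X^{\wedge n}\bigr)_{h\Sigma_n}$, and each summand is connective whenever $X$ is; thus $\mathrm{Free}$ carries connective spectra to connective algebras, and in particular the initial algebra $\mathrm{Free}(0)$ (the empty coproduct) is connective. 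As a left adjoint $\mathrm{Free}$ preserves coproducts, so $\mathrm{Free}(X)\sqcup \mathrm{Free}(Y)\simeq \mathrm{Free}(X\oplus Y)$ is connective for connective $X,Y$. To pass to arbitrary connective algebras $A,B$, I would invoke the monadic bar resolution, which exhibits any algebra as a geometric realization of a simplicial object whose terms are free algebras on the underlying spectra occurring in the resolution. When $A$ is connective all these underlying spectra are connective, so $A\simeq |\,F_\bullet\,|$ with $F_n=\mathrm{Free}(P_n)$ for connective spectra $P_n$, and likewise $B\simeq |\,\mathrm{Free}(Q_\bullet)\,|$ with $Q_n$ connective. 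Since coproducts commute with colimits in each variable, $A\sqcup B\simeq |\,n\mapsto \mathrm{Free}(P_n)\sqcup \mathrm{Free}(Q_n)\,|\simeq |\,n\mapsto \mathrm{Free}(P_n\oplus Q_n)\,|$, a geometric realization of connective algebras; by the sifted case already handled, $A\sqcup B$ is connective.

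The main obstacle is precisely this coproduct step: unlike sifted colimits, coproducts of algebras are not detected by the forgetful functor, so the explicit free-algebra formula together with the reduction of general algebras to free ones (via the bar construction) are what control connectivity. As an alternative that bypasses the bar construction, one could observe that $i\colon \mathrm{Sp}_{\geq 0}\hookrightarrow \mathrm{Sp}$ is a fully faithful symmetric monoidal left adjoint whose right adjoint $\tau_{\geq 0}$ is lax symmetric monoidal; the induced functor $\mathrm{Alg}_{\mathcal{O}}(i)$ is then fully faithful and admits the right adjoint $\mathrm{Alg}_{\mathcal{O}}(\tau_{\geq 0})$, hence preserves all colimits, yielding the closure directly \cite{Lur12}.
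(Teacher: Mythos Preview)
Your main argument is correct and uses the same ingredients as the paper---the free-algebra formula, the monadic bar resolution, and the fact that the forgetful functor preserves sifted colimits. The only difference is organizational: the paper treats an arbitrary indexing category $\mathcal{I}$ in one pass, writing $\varinjlim_{\mathcal{I}} F \simeq \bigl|\,\varinjlim_{\mathcal{I}} B(T_{\mathcal{O}},F)_\bullet\,\bigr|$ and then using that $\mathbb{P}_{\mathcal{O}}$, as a left adjoint, commutes with the colimit in each simplicial degree; you instead split into sifted colimits (handled directly) and finite coproducts (handled via the bar resolution). Both routes collapse the problem to connectivity of $\mathbb{P}_{\mathcal{O}}$ applied to connective spectra. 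Your alternative via the fully faithful left adjoint $\mathrm{Alg}_{\mathcal{O}}(i)$ with right adjoint $\mathrm{Alg}_{\mathcal{O}}(\tau_{\geq 0})$ is a genuinely different and slicker argument that the paper does not give; it requires knowing that $\mathrm{Alg}_{\mathcal{O}}(\mathrm{Sp}_{\geq 0})$ is itself cocomplete, but once granted it avoids the bar construction entirely.
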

\begin{proof} 
There is a 
free-forgetful adjunction \cite[Prop.~3.1.3.11]{Lur12}
\begin{equation} \label{adj} (\mathbb{P}_{\mathcal{O}},
\mathrm{U}_{\mathcal{O}}) \colon \mathrm{Sp}
\rightleftarrows \mathrm{Alg}_{\mathcal{O}}( \mathrm{Sp}). \end{equation}
Here $\mathrm{U}_{\mathcal{O}}$ is the functor that sends an $\mathcal{O}$-algebra to
its underlying spectrum. It is conservative \cite[Lem.~3.2.2.6]{Lur12} and commutes with sifted colimits \cite[Prop.~3.2.3.1]{Lur12}, so it is monadic \cite[Thm.~6.2.2.5]{Lur12}. The functor $\mathbb{P}_{\mathcal{O}}$ is described via
$\mathbb{P}_{\mathcal{O}}(X) \simeq \bigsqcup_{n \geq
0} \left( \mathcal{O}(n)_+ \wedge X^{\wedge n}\right)_{h \Sigma_n} $ by \cite[Prop.~3.1.3.11]{Lur12} since $\mathcal{O}$ has one color. Since $\mathrm{Sp}_{\geq 0}$ is closed under smash powers, smashing with the spaces $\mathcal{O}(n)_+$, and colimits, we see that $\mathbb{P}_{\mathcal{O}}$ takes $\mathrm{Sp}_{\geq 0}$ to $\mathrm{Alg}_{\mathcal{O}}(
\mathrm{Sp}_{\geq 0}) \subset \mathrm{Alg}_{\mathcal{O}}( \mathrm{Sp})$. 
Consider the composition \[T_{\mathcal{O}} =\mathbb{P}_{\mathcal{O}}\circ
\mathrm{U}_{\mathcal{O}}
\colon
\mathrm{Alg}_{\mathcal{O}}( \mathrm{Sp}) \to
\mathrm{Alg}_{\mathcal{O}}(\mathrm{Sp}).\]
Since the adjunction \eqref{adj} is monadic, 
any $X \in \mathrm{Alg}_{\mathcal{O}}(\mathrm{Sp})$ can be obtained
as the geometric realization of the simplicial bar construction
$B(T_{\mathcal{O}}, X)_\bullet$, where $B(T_{\mathcal{O}},
X)_\bullet$ is a simplicial object in 
$\mathrm{Alg}_{\mathcal{O}}(\mathrm{Sp})$ with 
$B(T_{\mathcal{O}},
X)_n = T_{\mathcal{O}}^{n+1} X$ \cite[Prop.~6.2.2.12]{Lur12}. 

Let $\mathcal{I}$ be an $\infty$-category and let $F \colon \mathcal{I} \to
\mathrm{Alg}_{\mathcal{O}}( \mathrm{Sp}_{\geq 0})$ be a functor. 
We can use the bar construction to compute $\varinjlim_{\mathcal{I}} F $. 
Namely, consider the functor $\widetilde{F}_\bullet\colon \mathcal{I} \to \mathrm{Fun}( \Delta^{op},
\mathrm{Alg}_{\mathcal{O}}( \mathrm{Sp}))$ given by
$B(T_{\mathcal{O}}, F)_\bullet$. 
Then $F \simeq |\widetilde{F}_\bullet|$ as functors $\mathcal{I} \to
\mathrm{Alg}_{\mathcal{O}}( \mathrm{Sp})$. Therefore, 
we have
\begin{equation} \label{colimiteqn}\varinjlim_{\mathcal{I}} F   =
\varinjlim_{\mathcal{I}} | \widetilde{F}_\bullet|
\simeq \left| \varinjlim_{\mathcal{I}}B( T_{\mathcal{O}}, F)_\bullet \right| 
\end{equation}
By definition, $\varinjlim_{\mathcal{I}}B(T_{\mathcal{O}}, F)_\bullet $
refers to the simplicial $\mathcal{O}$-algebra $A_\bullet$ with $A_n = 
\varinjlim_{\mathcal{I}}
T_{\mathcal{O}}^{n+1}F$, where the colimit
is computed in $\mathrm{Alg}_{\mathcal{O}}( \mathrm{Sp})$. 
Since $\mathbb{P}_{\mathcal{O}}$ is a left adjoint, it commutes with colimits and 
we can also write this as $A_n \simeq \mathbb{P}_{\mathcal{O}} \left(
\varinjlim_{\mathcal{I}} \mathrm{U}_{\mathcal{O}}  T_{\mathcal{O}}^n F\right)$. 
Since $\mathrm{Sp}_{\geq 0} \subset \mathrm{Sp}$ is closed under colimits, and
$\mathbb{P}_{\mathcal{O}}$ preserves connectivity, it
follows that 
each $A_n \in \mathrm{Alg}_{\mathcal{O}}(
\mathrm{Sp})$ is connective. 
Since the forgetful functor $\mathrm{U}_{\mathcal{O}}$ commutes with
geometric realizations, \eqref{colimiteqn} now shows that
$\varinjlim_{\mathcal{I}} F  $ is connective as desired. 
\end{proof} 

\begin{thm}[Hopkins-Mahowald]\label{thm:hopkinsmahowald} 
	For every prime $p$, the free $E_2$-ring $R$ with $p=0$ is the Eilenberg--MacLane spectrum $H\bZ/p$.
\end{thm}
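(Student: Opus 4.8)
The plan is to realize $R$ explicitly as an $E_2$-cell attachment, produce a comparison map to $H\bZ/p$, and prove it an equivalence by a mod-$p$ homology computation. Writing $S$ for the sphere as the initial $E_2$-ring, I would form the pushout of $E_2$-rings
\[
R \simeq S \otimes_{\mathbb{P}_{E_2}(S^0)} S,
\]
where one structure map $\mathbb{P}_{E_2}(S^0)\to S$ classifies $p\in\pi_0 S$ and the other is the augmentation sending the generator to $0$; this is precisely the free $E_2$-ring equipped with a nullhomotopy of $p$. Since $S$ and $\mathbb{P}_{E_2}(S^0)$ are connective and $R$ is a colimit of such $E_2$-algebras, \Cref{lem:connective} shows $R$ is connective. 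Because $H\bZ/p$ is an $E_\infty$- (hence $E_2$-) ring in which $p=0$, the universal property of $R$ supplies a canonical map of $E_2$-rings $f\colon R\to H\bZ/p$.

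I would then compute $\pi_0$ and mod-$p$ homology. On $\pi_0$ the pushout gives $\pi_0 R\cong \bZ\otimes_{\bZ[x]}\bZ\cong \bZ/p$ (with $x\mapsto p$ and $x\mapsto 0$), and connectivity together with the Hurewicz theorem identifies this with $H_0(R;\bZ/p)$; in particular $p$ is nilpotent in $\pi_0 R$, so every homotopy group of $R$ is $p$-torsion. For the homology, note that $R$ is obtained from $S$ by freely attaching a single $E_2$-cell of dimension $1$ along $p$. Since $p\equiv 0$ on mod-$p$ homology, the attaching datum becomes null after smashing with $H\bZ/p$, whence
\[
H\bZ/p\wedge R \simeq \mathbb{P}_{E_2}^{H\bZ/p}(\Sigma H\bZ/p),
\]
the free $E_2$-$H\bZ/p$-algebra on a single generator $u$ in degree $1$. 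Consequently $H_*(R;\bZ/p)$ is the free algebra over the mod-$p$ Dyer--Lashof operations on the class $u\in H_1$; iterating the top $E_2$-operation on $u$ produces algebra generators exactly in the degrees $2^i-1$ for $p=2$, and in the degrees of the Milnor generators $\xi_i,\tau_i$ for $p$ odd.

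The comparison with the dual Steenrod algebra $\mathcal{A}_*=H_*(H\bZ/p;\bZ/p)$ is the crux, and I expect it to be the main obstacle. The map $f$ induces a morphism $H_*(R;\bZ/p)\to \mathcal{A}_*$ of $E_2$-Dyer--Lashof algebras; a Bockstein computation shows that $u$ maps to the bottom Milnor generator ($\xi_1$ for $p=2$, $\tau_0$ for $p$ odd), and in particular nontrivially. The essential classical input is the computation of the Dyer--Lashof action on $\mathcal{A}_*$, namely that $\mathcal{A}_*$ is itself the \emph{free} $E_2$-Dyer--Lashof algebra on this bottom class. Granting this, $f_*$ carries generators to generators and matches Poincar\'e series, hence is an isomorphism. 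Verifying this freeness---equivalently, that iterated top operations applied to $\xi_1$, resp.\ $\tau_0$, hit all Milnor generators modulo decomposables---is the technical heart of the argument.

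Finally I would upgrade the homology isomorphism to an equivalence. As $R$ is connective and $f$ is a mod-$p$ homology isomorphism, $f$ is an equivalence after $p$-completion. Since $\pi_* R$ is $p$-torsion of finite type and $R$ is bounded below, $R$ is already $p$-complete; as $H\bZ/p$ is $p$-complete, it follows that $f$ is an equivalence $R\simeq H\bZ/p$, as desired.
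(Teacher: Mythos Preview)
Your proposal is correct and follows essentially the same route as the paper's proof: the same pushout presentation of $R$, connectivity via \Cref{lem:connective}, the identification $H\bZ/p\wedge R\simeq \bP_{2,p}(\Sigma H\bZ/p)$ after observing that the attaching map becomes null mod $p$, the Bockstein argument to see that the degree-one class hits the bottom Milnor generator, and the $p$-completeness reduction. The only cosmetic difference is that the paper computes the Poincar\'e series of $H_*(R;\bZ/p)$ by invoking the Snaith splitting $\bP_2(S^1)\simeq \Sigma^\infty_+\Omega^2 S^3$ and the Serre spectral sequence rather than by directly describing the free $E_2$-Dyer--Lashof algebra, and then cites \cite[Ch.~III, Thms.~2.2--2.3]{BMMS86} for exactly the input you flagged as the technical heart, namely that the indecomposables of $\mathcal{A}_*$ are generated under $E_2$-Dyer--Lashof operations by the Bockstein class in degree one.
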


The following argument has also appeared in the preprint of Antol{\`i}n-Camarena
and Barthel \cite{ACB14}. 
\begin{proof}
	First we recall how $R$ is constructed. Let $\bP_2(-)$ (resp.~$\bP_{2,p}(-)$) denote the free $E_2$-ring functor on spectra (resp.~$H\bZ/p$-modules). Given an $E_2$-ring spectrum $T$ and a map $f\colon X\rightarrow T$ of spectra, let $\tilde{f}$ denote the adjoint map $\bP_2 X\rightarrow T$ of $E_2$-rings. Now $R$ is given as the following homotopy pushout diagram of $E_2$-ring spectra:
	\[ \xymatrix{  \bP_2 (S^0)\ar[r]^{\tilde{p}}\ar[d]^{\tilde{0}} & S^0\ar[d]\\
	 S^0 \ar[r] & R.}\]
 
	 First we observe that $R$ is connective, by
	 Lemma~\ref{lem:connective}. 	 Next we claim that $R$ is $p$-complete. By
	 construction $\pi_*R$ is a graded $\bZ/p$-algebra and hence $\pi_n R$ is a
	 $\bZ/p$-module for each integer $n$. It follows that \[ \mathrm{Hom}(\bZ[1/p],\pi_n R)=\mathrm{Ext}(\bZ[1/p], \pi_n R)=0\] for each $n$, so $R$ is $p$-complete by \cite[Prop.~2.6]{Bou79}. 

	 By construction, $R$ admits a canonical $E_2$-ring map $f\colon R\rightarrow H\bZ/p$ extending the unit map $S\rightarrow H\bZ/p$. Now $f$ is a map between two $p$-complete spectra and hence an equivalence if and only if it induces an isomorphism in mod-$p$ homology \cite[Thm.~3.1]{Bou79}. The homology of $H\bZ/p$ is the dual Steenrod algebra $\mathcal{A}_*$. To compute the homology of the source we smash the defining homotopy pushout diagram for $R$ with $H\bZ/p$ and apply $\pi_*$. 

	 After applying the natural equivalence $H\bZ/p\wedge \bP_2(-)\cong \bP_{2,p}(H\bZ/p\wedge -)$, we obtain the following homotopy pushout diagram of $E_2$-rings in $H\bZ/p$-modules:
	\[ \xymatrix{  \bP_{2,p} (H\bZ/p)\ar[r]^{\tilde{p}}\ar[d]^{\tilde{0}} & H\bZ/p\ar[d]\\
	 H\bZ/p \ar[r] & H\bZ/p  \wedge R}\]
	 Since $p\simeq 0$ in $H\bZ/p$-modules, we see that \[H\bZ/p\wedge R\simeq \bP_{2,p}(H\bZ/p \wedge S^1)\cong H\bZ/p \wedge \bP_2(S^1).\]
	 Now the generalized Snaith splitting theorem shows that $\bP_2(S^1)\simeq \Sigma^\infty_+ \Omega^2 \Sigma^2 S^1$ \cite[\S 6]{May09}, \cite[VII\S 5]{LMS86}, \cite[Thm.~6.1]{May72}. Using this splitting and two applications of the Serre spectral sequence we see that $H_*(R;\bZ/p)$ has the same Poincare series as $\mathcal{A}_*$, so it suffices to show that $H_*f$ is surjective.  

	  The remainder of the argument is essentially that of \cite[Ch.~III Prop.~4.8]{BMMS86}. Since $H_*f$ is a map of graded commutative rings, it is surjective if it surjects onto the indecomposables of $\mathcal{A}_*$. By \cite[Ch.~III Thms.~2.2-2.3]{BMMS86}, the indecomposables of $\mathcal{A}_*$ are generated by the Bockstein class in degree one under the action of the Dyer-Lashof operations coming from the underlying $E_2$-ring structure. Since $H_* f$ commutes with these operations it suffices to show that it hits the Bockstein class in degree one. 

	  Since $R$ is connective and $\pi_0R$ is a $\bZ/p$-module we see that \[\pi_0 R\cong H_0(R;\bZ)\cong H_0(R;\bZ/p)\cong H_0(\Omega^2 S^3;\bZ/p)=\bZ/p.\] It follows that $1\in H_0(R;\bZ/p)$ must be the target of a Bockstein operation and similarly for $1\in \mathcal{A}_*$. Since $H_*f$ is a map of unital algebras commuting with the Bockstein operations we see that $H_*f$ hits the Bockstein class generating $\mathcal{A}_1$. It now follows that $f$ is a weak equivalence.

	 % In greater detail: $H_0(R;\bZ_p)\cong H_0(P^0 R;\bZ_p)\cong \pi_0 R\cong \bZ/p$. This means that in the mod-p homology rings of the source and target there is a homology Bockstein connecting a degree 1 class with the unit element. If x is this degree 1 class in the source, then \Beta p_0 x = p_0 \beta x = p_0 1 =1 so p_0 x hits the unique element (for degree reasons) with Bockstein 1 in the dual Steenrod algebra. 
% For $p =2$, the relevant calculation is effectively contained in \cite{Ma79}, which is phrased in terms of Thom spectra. The idea is that after applying mod-$p$ homology to $R$, one obtains the homology of the $\Sigma^\infty_+\Omega^2 S^3\simeq \bP_2(S^1)$ as an algebra over the $E_2$-Dyer-Lashof algebra. By the calculation of the Dyer-Lashof operations in \cite[Ch.~III]{BMMS86}, the 0th Postnikov section $R\rightarrow H\mathbb{Z}/p$ is then a mod-$p$ homology isomorphism. Since $\pi_0R\cong \mathbb{Z}/p$, $R$ is $p$-complete and it follows that $R\simeq H\mathbb{Z}/ p$.
\end{proof}

This leads to the following nilpotence result, whose assumptions lie roughly in
between the nilpotence theorem and \Cref{thm:may}. It is a
generalization of Nishida's argument \cite{Nis73} on the nilpotence of order $p$ elements in the stable stems.

\begin{prop}\label{prop:simple_nilpotence}
	Suppose $R$ is an $E_2$-ring, $p$ is a prime, and $x \in \pi_*R$ is {\em simple} $p$-torsion and has nilpotent image  under the Hurewicz homomorphism $\pi_* R\rightarrow H_*(R; \mathbb{Z}/p)$. Then $x$ is nilpotent.
\end{prop}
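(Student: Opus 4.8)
The plan is to reduce everything to the contractibility of the localization $R[x^{-1}]$ and then exploit the Hopkins--Mahowald theorem. The starting observation is the standard fact that an element $x \in \pi_* R$ is nilpotent if and only if the localized spectrum $R[x^{-1}]$ is contractible: if $x^k = 0$ then inverting $x$ annihilates the unit, and conversely $R[x^{-1}] \simeq 0$ forces $1 = 0$ after multiplying by some power of $x$, i.e.\ $x^k = 0$. So it suffices to prove $R[x^{-1}] \simeq 0$.

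First I would record that $R[x^{-1}]$ is an $E_2$-ring; this is exactly the localization result quoted in the introduction (valid for $E_n$-rings with $n \geq 2$, hence proved in \Cref{sec:localization}), and it is the only place the $E_2$-hypothesis on $R$ is used apart from Hopkins--Mahowald. Since $px = 0$ and $x$ becomes a unit in $R[x^{-1}]$, multiplying the relation $px = 0$ by $x^{-1}$ shows that $p = 0$ in $\pi_0 R[x^{-1}]$; here it is essential that $x$ is \emph{simple} $p$-torsion rather than merely torsion, since otherwise one would only obtain $p^k = 0$. Thus $R[x^{-1}]$ is an $E_2$-ring in which $p$ is null, so by the universal property of $H\bZ/p$ established in \Cref{thm:hopkinsmahowald} there is an $E_2$-ring map $H\bZ/p \to R[x^{-1}]$. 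Restricting to the underlying $E_1$-structure, this makes $R[x^{-1}]$ a module over $H\bZ/p$.

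The argument then closes using two elementary facts about $H\bZ/p$-modules. Any module over $H\bZ/p$ splits as a wedge of suspensions of $H\bZ/p$, so $R[x^{-1}]$ is contractible if and only if $H_*(R[x^{-1}]; \bZ/p) = 0$. On the other hand, mod-$p$ homology commutes with the filtered colimit defining the localization, giving $H_*(R[x^{-1}]; \bZ/p) \cong H_*(R; \bZ/p)[\bar{x}^{-1}]$, where $\bar{x}$ is the mod-$p$ Hurewicz image of $x$. By hypothesis $\bar{x}$ is nilpotent, so this localized ring vanishes. Hence $H_*(R[x^{-1}]; \bZ/p) = 0$, whence $R[x^{-1}] \simeq 0$ and $x$ is nilpotent.

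I expect the only genuinely delicate points to be bookkeeping rather than conceptual: checking that the localization really is $E_2$ (imported from \Cref{sec:localization}) and verifying the identification $H_*(R[x^{-1}]; \bZ/p) \cong H_*(R; \bZ/p)[\bar{x}^{-1}]$, i.e.\ that inverting $x$ on $R$ induces inverting $\bar{x}$ on homology. The conceptual crux is the ordering of operations: by inverting $x$ \emph{before} invoking Hopkins--Mahowald, one converts the simple $p$-torsion hypothesis into the literal relation $p = 0$ that makes $R[x^{-1}]$ an $H\bZ/p$-algebra, after which the nilpotence of the Hurewicz image does all the remaining work. This explains why the hypotheses sit between those of the nilpotence theorem and \Cref{thm:may}: only mod-$p$ homology and an $E_2$-structure are needed, at the cost of restricting to simple $p$-torsion.
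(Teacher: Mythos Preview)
Your proposal is correct and follows essentially the same route as the paper's proof: reduce to contractibility of $R[x^{-1}]$, use \Cref{thm:enloc} to get the $E_2$-structure, observe $p=0$ there, invoke \Cref{thm:hopkinsmahowald} to make $R[x^{-1}]$ an $H\bZ/p$-module, and conclude from $H_*(R[x^{-1}];\bZ/p)=0$. You have in fact spelled out a couple of points the paper leaves implicit, namely the identification $H_*(R[x^{-1}];\bZ/p)\cong H_*(R;\bZ/p)[\bar{x}^{-1}]$ via the filtered colimit and the fact that an $H\bZ/p$-module with vanishing mod-$p$ homology is contractible.
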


\begin{proof}
	It suffices to show that the localization $R[x^{-1}]$ is weakly contractible. Now  $R[x^{-1}]$ is an $E_2$-ring by \Cref{thm:enloc} and $p=0\in \pi_0R[x^{-1}]$. By  \Cref{thm:hopkinsmahowald}, $R[x^{-1}]$ is an $H\bZ/p$-algebra, and in particular a generalized Eilenberg--MacLane spectrum. Since $H_*(R[x^{-1}]; \mathbb{Z}/p) = 0$, $R[x^{-1}]$ must be weakly contractible.
\end{proof}

As mentioned earlier, it is also known that any \emph{homotopy commutative} ring spectrum with $2 =
0$ is a generalized Eilenberg--MacLane spectrum; this is the main result of
\cite{W86}. We do not know if analogs of these results hold with respect to higher order
torsion. For instance, we do not know if the free $E_2$-ring with $4 = 0$ is
$K(n)$-acyclic for $0 < n < \infty$. Such a claim would strengthen our main result.
We do note that the free $E_n$-ring with $p^k=0$ is \emph{not} a generalized Eilenberg--MacLane spectrum for $n,k\geq 2$ (unpublished).

%The argument is very similar to the positive argument for $n=2$ and $k=1$ above. Let $R$ be the universal $E_n$-ring spectrum $R$ with $p^k=0$. The argument above shows that $R$ is connective and $p$-complete (use $\bZ[1/p^k]=\bZ[1/p]$) with $\pi_0 R=\bZ/p^k$). If $R$ was a generalized Eilenberg MacLane spectrum then the first Postnikov section $f\colon R\rightarrow H\bZ/p^k$ would split. In particular $H_*f$ would be a split surjection. 

%For $n\geq 2$, H_i( R;\bZ/p)\cong H_i(\Omega^nS^{n+1};\bZ/p)$, is independent of $n$ for $0\leq i\leq 2p-2$ (there is an exterior class in degree 1 and a polynomial class in degree $2p-2$ when $p$ is odd; when $p$ is even there is a polynomial class in degree 1). When $k>1$, $H_{2p-2}(H\bZ/p^k;\bZ/p)$  is generated by an indecomposable for $p$ odd that is not in the image of a non-trivial operation. However $Ind(H_{2p-2} R)$ is decomposable when $p=2$ and  generated by a class in the image of $\Beta Q^1$ when $p$ is odd. By naturality we see that $H_{2p-2}(H\bZ/p^k;\bZ/p)$  is not in the image of $H_*f$ and the conclusion follows.
\appendix

\section{Localizations of $E_n$-ring spectra}\label{sec:localization}

In this appendix, we give a proof of the following result.
\begin{theorem} \label{thm:enloc}
Let $n \geq 2$.
Let $R$ be an $E_n$-ring and $S \subseteq \pi_* R$ be a multiplicative
subset of homogeneous elements. Then there exists a unique $E_n$-ring $S^{-1} R$ under $R$ such the map 
$R \to S^{-1}R$ induces on 
$\pi_*$ the localization map $\pi_*(R) \to S^{-1} \pi_*(R)$.
\end{theorem}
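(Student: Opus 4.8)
The plan is to realize $S^{-1}R$ as the image of $R$ under a smashing localization of the $\infty$-category $\mathrm{Mod}_R$ of left $R$-modules, and then to transport the resulting algebra structure back to $E_n$-rings under $R$. The hypothesis $n \geq 2$ enters at the very first step: for an $E_n$-ring $R$ the $\infty$-category $\mathrm{Mod}_R$ carries an $E_{n-1}$-monoidal structure with unit $R$ \cite[\S 7.1]{Lur12}, and for $n \geq 2$ this is in particular a monoidal (i.e.\ $E_1$-monoidal) structure, so that the relative smash product $\otimes_R$ is defined. (For $n = 1$ no such structure is available, which is what obstructs the statement there.) First I would form, inside $\mathrm{Mod}_R$, the full subcategory of $S$-local modules --- those $M$ on which each $s \in S$ acts invertibly --- and observe that it is a reflective localization, with reflection $L$ given explicitly by the filtered colimit (telescope) over multiplication by the elements of $S$. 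Since $\otimes_R$ commutes with filtered colimits in each variable, this localization is smashing: $L(M) \simeq L(R) \otimes_R M$, and we set $S^{-1}R := L(R)$.

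The key structural step is to upgrade $S^{-1}R$ to an $E_n$-ring under $R$. Because $\otimes_R$ preserves filtered colimits, tensoring an $L$-equivalence with any object is again an $L$-equivalence; hence $L$ is compatible with the $E_{n-1}$-monoidal structure in the sense of \cite[\S 2.2.1]{Lur12}, and the localized category $S^{-1}\mathrm{Mod}_R \simeq \mathrm{Mod}_{S^{-1}R}$ inherits an $E_{n-1}$-monoidal structure for which $L$ is $E_{n-1}$-monoidal. In particular $S^{-1}R = L(R)$, being the image of the unit, acquires a canonical structure of $E_{n-1}$-algebra in $\mathrm{Mod}_R$ together with a unit map from $R$. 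Transporting this datum through the standard identification of $E_{n-1}$-algebras in $\mathrm{Mod}_R$ with $E_n$-rings under $R$ (the finite-$n$ form of Dunn--Lurie additivity \cite[\S 5.1]{Lur12}, generalizing the familiar equivalence $\mathrm{CAlg}(\mathrm{Mod}_R) \simeq \mathrm{CAlg}(\mathrm{Sp})_{R/}$ in the case $n = \infty$) produces the desired $E_n$-ring $S^{-1}R$ equipped with an $E_n$-map $R \to S^{-1}R$. I expect this transport --- keeping the operadic bookkeeping straight while passing between algebras in module categories and rings under $R$ --- to be the main technical obstacle; everything else is formal.

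It then remains to identify the homotopy and to prove uniqueness, both of which are routine. On underlying $R$-modules $S^{-1}R$ is the telescope computing the localization, so $\pi_*(S^{-1}R) \cong S^{-1}\pi_* R$ and the map $R \to S^{-1}R$ realizes $\pi_* R \to S^{-1}\pi_* R$. For uniqueness, note that since $L$ is $E_{n-1}$-monoidal the reflection passes to algebras, exhibiting $S^{-1}R$ as the initial $S$-local $E_n$-ring under $R$; any $E_n$-ring $A$ under $R$ whose structure map induces the localization on $\pi_*$ has $S$-local underlying module, hence receives an essentially unique map from $S^{-1}R$ under $R$, and this map is a $\pi_*$-isomorphism, hence an equivalence.
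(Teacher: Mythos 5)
Your setup --- the telescopic construction of the $S$-localization, the observation that it is smashing, and the resulting compatibility of $L$ with the $E_{n-1}$-monoidal structure on $\mathrm{Mod}_R$ --- is sound, and runs parallel to the paper's argument (the paper establishes the same compatibility by showing $\mathrm{Nil}_S$ is a tensor-ideal via a localizing-subcategory argument rather than via smashing; both work). The gap is exactly at the step you flagged as the main technical obstacle: there is no ``standard identification of $E_{n-1}$-algebras in $\mathrm{Mod}_R$ with $E_n$-rings under $R$,'' and Dunn--Lurie additivity does not provide one. Additivity identifies $E_n$-rings with $E_{n-1}$-algebras in $\mathrm{Alg}_{E_1}(\mathrm{Sp})$ equipped with the \emph{smash product} monoidal structure, and its relative form identifies $E_k$-rings under $A$ with $E_k$-algebras in the category of \emph{$E_k$-modules} $\mathrm{Mod}_A^{E_k}$ (for finite $k$ these are bimodule-type objects, not left modules). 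Neither of these is $\mathrm{Alg}_{E_{n-1}}(\mathrm{Mod}_R,\otimes_R)$. Indeed the identification you invoke is false: take $R=S$, so that $\mathrm{Mod}_R=\mathrm{Sp}$. Then $E_{n-1}$-algebras in $\mathrm{Mod}_R$ are just $E_{n-1}$-rings, while $E_n$-rings under $S$ are $E_n$-rings, and no identification compatible with underlying spectra can exist: for $n=2$, an $E_1$-algebra in $\mathrm{Sp}$ may have non-commutative $\pi_0$, whereas $\pi_0$ of any $E_2$-ring is commutative. What your construction actually produces is this: since the inclusion $\mathcal{C}_S\subseteq \mathrm{Mod}_R$ composed with $L$ is lax $E_{n-1}$-monoidal, $L(R)$ becomes an $E_{n-1}$-algebra in $\mathrm{Mod}_R$, hence its underlying spectrum an $E_{n-1}$-ring under $R$ --- one level of multiplicative structure short of what \Cref{thm:enloc} asserts.

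The missing copy of $E_1$ cannot come from the algebra structure in the module category; it comes from composition of endomorphisms of the unit, and this is how the paper closes the argument. You already have everything needed to do the same: the localized category $\mathcal{C}_S$ of $S$-local modules is presentable, stable, and $E_{n-1}$-monoidal with colimit-preserving tensor, and its unit $L(R)$ is a compact generator (compactness and generation follow easily from the smashing property, since colimits of $S$-local modules are computed in $\mathrm{Mod}_R$ and mapping out of $L(R)$ into a local module agrees with mapping out of $R$). The $E_n$-version of the Schwede--Shipley theorem \cite[Prop.~8.1.2.6]{Lur12} then gives an equivalence $\mathcal{C}_S\simeq \mathrm{Mod}(R')$ for an $E_n$-ring $R'$; concretely, the $E_{n-1}$-monoidal structure of $\mathcal{C}_S$ together with the $E_1$-structure of endomorphism composition endow $S^{-1}R=\mathrm{End}_{\mathcal{C}_S}(L(R))$ with its $E_n$-structure, and the $E_{n-1}$-monoidal functor $L\colon \mathrm{Mod}_R\to\mathcal{C}_S$ induces the desired $E_n$-map $R\to S^{-1}R$ on endomorphism algebras of the units. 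With the existence step repaired in this way, your identification of $\pi_*(S^{-1}R)$ via the telescope and your uniqueness argument (initiality among $S$-local $E_n$-rings under $R$, or equivalently the paper's identification of $\mathrm{Hom}_{E_n}(S^{-1}R,R')$ with the components of $\mathrm{Hom}_{E_n}(R,R')$ inverting $S$) go through essentially as written.
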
 

The analog of this result for $E_1$-rings is \cite[\S 8.2.4]{Lur12}. 
Specifically, in there, it is shown:

\begin{theorem} 
\label{thm:locE1}
Let $R$ be an $E_1$-ring. Let $S \subseteq \pi_* R$ be a multiplicative subset of 
homogeneous elements that satisfies the left Ore condition \cite[Defn.~8.2.4.1]{Lur12}. 
Consider the subcategory $\mathcal{C}_S \subseteq \mathrm{Mod}(R)$ consisting of those left
$R$-modules $M$ such that for $s \in S$, multiplication by $s$ induces an
isomorphism on $\pi_*(M)$. Then:
\begin{enumerate}
\item $\mathcal{C} _S$ is closed under  arbitrary limits and colimits. 
\item The inclusion $\mathcal{C}_S \subseteq \mathrm{Mod}(R)$ admits a left
adjoint denoted $M \mapsto S^{-1 } M$.
\item 
$\mathcal{C}_S$ is generated as a localizing subcategory by $S^{-1} R$, which
is a compact object of $\mathcal{C}_S$.
\item For any $M$, the adjunction map $M \to S^{-1} M$ induces the map
$\pi_*(M) \to S^{-1}\pi_*(M)$ on homotopy groups. 
\item Let $\mathrm{Nil}_S$ be the collection of $R$-modules $M$ such that
$S^{-1}M$ is contractible. 
For each $s \in S$, let $R/s\in\mathrm{Mod}(R)$ denote the cofiber of the map $\Sigma^{|s|} R \to R$ given by right
multiplication by $s$.
Then $\mathrm{Nil}_S$ is the stable
subcategory generated under colimits by the $R/s$ for $s \in S$. \end{enumerate}
\end{theorem}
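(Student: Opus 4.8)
The plan is to exhibit $\mathcal{C}_S$ as the local objects of a Bousfield localization of the presentable stable $\infty$-category $\mathrm{Mod}(R)$, namely the localization that inverts the set of maps $W = \{s\colon \Sigma^{|s|}R \to R\}_{s \in S}$ given by right multiplication, and then to make this localization completely explicit using the left Ore condition. Throughout, the key structural input is that $\mathrm{Mod}(R)$ is generated by the compact object $R$ and that each cofiber $R/s$ is again compact, being the cofiber of a map between compact objects.

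For part (1), I would observe that for fixed $s \in S$ a left module $M$ has $s$ acting invertibly on $\pi_*(M)$ if and only if $\mathrm{Map}_{\mathrm{Mod}(R)}(R/s, M) \simeq 0$; this follows from the fiber sequence $\mathrm{Map}(R/s, M) \to M \xrightarrow{s} \Sigma^{-|s|}M$ coming from $\Sigma^{|s|}R \to R \to R/s$. Since $R/s$ is compact, the functor $\mathrm{Map}(R/s, -)$ preserves all colimits, and it preserves all limits automatically; hence the full subcategory $\mathcal{C}_s$ of objects it annihilates is closed under both limits and colimits. Then $\mathcal{C}_S = \bigcap_{s \in S} \mathcal{C}_s$ inherits the same closure, giving (1).

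Parts (2)--(4) are the technical heart, and here I would produce the localization explicitly. Using the left Ore condition to organize $S$ into a filtered diagram of denominators, I would construct $S^{-1}R$ as the corresponding filtered colimit of shifted copies of $R$; because $\pi_*$ commutes with filtered colimits, its homotopy is the classical Ore localization $S^{-1}\pi_* R$, and $S^{-1}R$ acquires an $E_1$-$R$-algebra structure refining the ring-level localization. Setting $S^{-1}M := S^{-1}R \otimes_R M$ then gives an exact, colimit-preserving endofunctor of $\mathrm{Mod}(R)$ whose effect on homotopy is $\pi_*(M) \to S^{-1}\pi_*(M)$, again since tensoring with the filtered colimit computes the Ore localization on $\pi_*$; this is part (4), and it simultaneously shows $S^{-1}M \in \mathcal{C}_S$ and that the unit $M \to S^{-1}M$ is an equivalence whenever $M$ is already $S$-local, so $M \mapsto S^{-1}M$ is the desired left adjoint of part (2). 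For part (3), the adjunction identifies $\mathrm{Map}_{\mathcal{C}_S}(S^{-1}R, M) \simeq \mathrm{Map}_{\mathrm{Mod}(R)}(R, M)$, which is the underlying-spectrum functor; since $R$ is compact and, by part (1), filtered colimits in $\mathcal{C}_S$ agree with those in $\mathrm{Mod}(R)$, this shows $S^{-1}R$ is compact in $\mathcal{C}_S$, and it generates because the colimit-preserving localization carries the generator $R$ of $\mathrm{Mod}(R)$ to $S^{-1}R$.

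Finally, for part (5) I would invoke the semiorthogonal decomposition attached to the localization: every $M$ sits in a fiber sequence $M' \to M \to S^{-1}M$ with $S^{-1}M \in \mathcal{C}_S$ and $M' \in \mathrm{Nil}_S$. Each $R/s$ lies in $\mathrm{Nil}_S$ since $s$ becomes invertible after localizing, so the localizing subcategory they generate is contained in $\mathrm{Nil}_S$; conversely $\mathrm{Nil}_S$ is precisely the left orthogonal of $\mathcal{C}_S$, while the localizing subcategory generated by $\{R/s\}$ has right orthogonal exactly $\mathcal{C}_S$, so in the compactly generated setting both equal the left orthogonal of $\mathcal{C}_S$ and thus coincide. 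The main obstacle is the homotopy computation underlying (3) and (4): verifying that the $\infty$-categorical filtered colimit defining $S^{-1}R$ really has homotopy groups given by the classical Ore localization, and hence that the naive $\pi_*$-isomorphism condition defining $\mathcal{C}_S$ coincides with being Bousfield-local. This is exactly the step where the left Ore condition is indispensable, since it is what makes the indexing diagram filtered and guarantees that inverting $S$ is an exact, colimit-compatible operation requiring no further derived correction.
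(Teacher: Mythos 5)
Your overall skeleton is sound, and your treatments of parts (1), (3) and (5) are correct and essentially forced: the identification of $\mathcal{C}_S$ with the right orthogonal of the compact objects $R/s$ via the fiber sequence $\mathrm{Map}(R/s,M)\to M\xrightarrow{s}\Sigma^{-|s|}M$, the compactness and generation argument for $S^{-1}R$, and the double-orthogonality argument for $\mathrm{Nil}_S$ are all fine. The genuine gap sits exactly at the center of your proof of (2) and (4): the phrase ``I would construct $S^{-1}R$ as the corresponding filtered colimit of shifted copies of $R$'' presupposes a diagram that you have not constructed. The left Ore condition produces the filtered category of denominators (objects the elements of $S$, morphisms $s\to s'$ the homogeneous $a$ with $s'=as$) and the transition maps as multiplications by elements of $\pi_*R$ --- but multiplication by a homotopy class is well defined only in the \emph{homotopy} category $h\mathrm{Mod}(R)$, and a filtered diagram in $h\mathrm{Mod}(R)$ does not in general rectify to a diagram in $\mathrm{Mod}(R)$: only especially simple shapes (e.g.\ sequential telescopes, which is why inverting a single element, or a commuting family, feels unproblematic) lift automatically. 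You flag ``$\pi_*$ of the filtered colimit equals the Ore localization'' as the main obstacle, but that step is immediate once the diagram exists; the existence of the coherent diagram is the real issue and you never address it. The problem is compounded when you assert, without construction, that the colimit ``acquires an $E_1$-$R$-algebra structure refining the ring-level localization'': your definition $S^{-1}M := S^{-1}R\otimes_R M$ needs this (or at least a bimodule structure) even to produce an endofunctor of left modules, and building a coherent multiplication on a hand-built colimit is precisely the hard part of the problem, not a remark.

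For comparison: the paper does not reprove this statement but quotes it from \cite[\S 8.2.4]{Lur12}, where the logical order is the reverse of yours. Parts (1)--(3) and (5) are obtained formally, with no construction and no Ore condition: since $\mathcal{C}_S$ is the right orthogonal of a \emph{set} of compact objects in a presentable stable $\infty$-category, the accessible (Bousfield) localization $M\mapsto S^{-1}M$ exists abstractly --- which your own opening paragraph makes available, had you used it. The Ore condition is then needed only for (4), and it can be handled entirely on the algebraic side, with no spectrum-level colimit: the algebraic Ore localization $M\mapsto S^{-1}\pi_*(M)$ is an exact, colimit-compatible functor; it kills each $R/s$ (right multiplication by $s$ becomes invertible after localizing the long exact sequence), and the class of modules it kills is localizing, hence by (5) contains all of $\mathrm{Nil}_S$; applying this to the fiber sequence $M'\to M\to S^{-1}M$ with $M'\in\mathrm{Nil}_S$, and noting that $\pi_*(S^{-1}M)$ is its own Ore localization because it is $S$-local, yields $\pi_*(S^{-1}M)\cong S^{-1}\pi_*(M)$. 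Finally, the $E_1$-ring structure on $S^{-1}R$ --- which the statement as given does not even assert --- is in \cite{Lur12} an \emph{output} rather than an input: once $S^{-1}R$ is known to be a compact generator of $\mathcal{C}_S$, the $\infty$-categorical Schwede--Shipley theorem \cite{ScS03a} identifies $\mathcal{C}_S$ with modules over $\mathrm{End}_{\mathcal{C}_S}(S^{-1}R)\simeq S^{-1}R$; this is exactly the mechanism the paper's appendix then upgrades to prove \Cref{thm:enloc}. If you rearrange your argument along these lines, the Ore-fraction colimit of spectra --- the step where your write-up actually fails --- becomes unnecessary.
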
 

\Cref{thm:locE1} is proved in {\em loc.~cit.}~by constructing 
the $\infty$-category of $S^{-1}R$-modules, and appealing to the
analog of the Schwede-Shipley theorem \cite{ScS03a} for compactly generated, presentable stable
$\infty$-categories. We will explain how this proof can be modified to
prove \Cref{thm:enloc}
using the $E_n$-version of the Schwede-Shipley theorem \cite[Prop.~
8.1.2.6]{Lur12}, which states that a presentable, stable
$E_{n-1}$-monoidal $\infty$-category where the tensor structure preserves
colimits in each variable is equivalent to $\mathrm{Mod}(R)$  for an $E_n$-ring $R$ if and
only if the unit $\mathbf{1}$ is a compact generator.
In addition, given $E_n$-rings $R$, and $R'$, to give a morphism of $E_n$-rings $R
\to R'$ is equivalent to giving an $E_{n-1}$-monoidal functor $\mathrm{Mod}(R)
\to \mathrm{Mod}(R')$. Here an $E_{n-1}$-monoidal
functor $\mathrm{Mod}(R) \to \mathrm{Mod}(R')$ induces a map between the endomorphism algebras of the unit objects in these two categories and this is the desired $E_n$-ring map.

\begin{proof}[Proof of \Cref{thm:enloc}] 
Since $R$ is an $E_n$-ring, the $\infty$-category $\mathrm{Mod}(R)$ is
naturally $E_{n-1}$-monoidal \cite[Prop.~6.3.5.17]{Lur12}. Given $X, Y \in \mathrm{Mod}(R)$, we let
$X \otimes_R Y $ denote the ordered $E_{n-1}$-monoidal product. It is
well-defined up to a \emph{connected} space of choices since the connected
components of the spaces in the $E_{n-1}$-operad are determined by linear
orderings for $n = 2$ (while there is only one component for $n \geq 3$). 
Moreover, since $n \geq 2$, $\pi_*(R)$ is graded-commutative and the left Ore
condition on $S $ is automatically satisfied \cite[Rem.~8.2.4.2]{Lur12}.
Therefore, it is possible to construct  a theory of $S$-localization: that is,
one can construct subcategories $\mathcal{C}_S, \mathrm{Nil}_S \subseteq	
\mathrm{Mod}(R)$ as in \Cref{thm:locE1}.

We claim that the $E_{n-1}$-monoidal structure on $\mathrm{Mod}(R)$ is compatible with
$S$-localization. In other words, there exists an $E_{n-1}$-monoidal structure
on the subcategory $\mathcal{C}_S \subseteq \mathrm{Mod}(R)$,
such that the $S$-localization $M \mapsto S^{-1}M$ is an $E_{n-1}$-monoidal
functor. 

This follows from \cite[Prop.~2.2.1.9]{Lur12} if we can prove
that, whenever we have maps of $R$-modules $M_1 \to M_2$ and $N_1 \to N_2$
that induce equivalences upon $S$-localizations, then $M_1 \otimes_R N_1 \to
M_2 \otimes_R N_2$ induces an equivalence on $S$-localizations, because then,
it will follow inductively that the operation of $S$-localization respects
arbitrary $k$-fold operations from the operad $E_{n-1}$.

Now, to say that $M_1 \to M_2$ (resp. $N_1 \to N_2$) induces an equivalence on
$S$-localizations 
is to say that the cofibers belong to $\mathrm{Nil}_S$.

It thus suffices to show that if $M, N \in \mathrm{Mod}(R)$ and \emph{one} of $M, N$ belongs
to $\mathrm{Nil}_S$, then $M \otimes_R N$ does. Suppose for definiteness that
$M$ belongs to $\mathrm{Nil}_S$. To show that $M \otimes_R N \in
\mathrm{Nil}_S$, consider the collection of all $N \in \mathrm{Mod}(R)$ such
that $M \otimes_R N \in \mathrm{Nil}_S$. This collection contains $R$, as the
unit, and it is a localizing subcategory. Therefore, it is all
of $\mathrm{Mod}(R)$ and we have proved  the existence of an
$E_{n-1}$-monoidal structure on $\mathcal{C}_S$, with $S^{-1}R$ as the unit;
as this is a compact generator, we have $\mathcal{C}_S \simeq
\mathrm{Mod}(S^{-1}R)$ as $E_{n-1}$-monoidal $\infty$-categories.
It follows from \cite[Prop.~8.1.2.6]{Lur12}
that we acquire a natural $E_{n}$-ring structure  on $S^{-1}R$.
Moreover, we obtain a natural map $R \to S^{-1} R$ of $E_n$-rings from the $E_{n-1}$-monoidal
functor $\mathrm{Mod}(R) \to \mathcal{C}_S$, by looking at endomorphisms of
the unit. 

We now prove uniqueness. From our construction of $\mathrm{Mod}(S^{-1} R)$ as a
localization of $\mathrm{Mod}(R)$, it follows that, for any $E_{n}$-ring $R'$, to give an $E_{n-1}$-monoidal
functor $\mathrm{Mod}(S^{-1}R) \to \mathrm{Mod}(R')$ is equivalent to giving an
$E_{n-1}$-monoidal functor $\mathrm{Mod}(R) \to \mathrm{Mod}(R')$ that takes $R/s$ to $0$ for
each $s \in S$. In particular, it follows that if $R'$ is an $E_{n}$-ring,
then the mapping space $\mathrm{Hom}_{E_n}(S^{-1}R, R')$ of $E_n$-ring maps
can be identified with  the union of components of 
$\mathrm{Hom}_{E_n}(R, R')$ consisting of those $E_n$-ring maps $\phi: R
\to R'$ that take each $s \in S$ to an invertible element of $\pi_*(R')$. 
Therefore, if $R'$ is an $E_{n}$-ring under $R$ such that the map $\pi_*(R)
\to \pi_*(R')$ exhibits $\pi_*(R')$ as $S^{-1}\pi_*(R)$, it follows that we
obtain a map $S^{-1}R \to R'$ of $E_{n}$-rings that is necessarily an
isomorphism.
\end{proof} 

	\bibliographystyle{amsalpha}

	\bibliography{bib/biblio}
\end{document}